\newtheorem{theorem}{Theorem}
\newtheorem{lemma}[theorem]{Lemma}
\newtheorem{definition}[theorem]{Definition}
\newtheorem{proposition}[theorem]{Proposition}
\newtheorem{corollary}[theorem]{Corollary}
\newtheorem{remark}[theorem]{Remark}
\title{Automorphism groups of Cayley-Dickson loops}
\author{Jenya Kirshtein \thanks{ykirshte@du.edu, Department of Mathematics, University of Denver, 2360 South Gaylord street, Denver, CO 80208, USA}}
\date{\vspace{-5ex}}
\begin{document}

\maketitle

\begin{abstract}
The Cayley-Dickson loop $Q_{n}$ is the multiplicative closure of basic elements of the algebra constructed by $n$ applications of the Cayley-Dickson doubling process (the first few examples of such algebras are real numbers, complex numbers, quaternions, octonions, sedenions). We discuss properties of the Cayley-Dickson loops, show that these loops are Hamiltonian, and describe the structure of their automorphism groups.

\end{abstract}

\let\thefootnote\relax\footnotetext{2010 Mathematics Subject Classification: 20N05, 17D99} 
\let\thefootnote\relax\footnotetext{Keywords: Cayley-Dickson doubling process, Hamiltonian loop, automorphism group, octonion, sedenion}
\section{The Cayley-Dickson doubling process}\label{sec:intro}
The Cayley-Dickson doubling produces a sequence of power-associative algebras over a field. The dimension of the algebra doubles at each step of the construction. We consider the construction on~$\mathbb{R}$, the field of real numbers. The results of the paper hold for any field of characteristic other than $2$.\\
Let $\mathbb{A}_{0}=\mathbb{R}$ with conjugation $a^{*}=a$ for all $a\in \mathbb{R}$. Let $\mathbb{A}_{n+1}=\{(a,b)\left|\right.a,b\in A_{n}\}$ for $n\in \mathbb{N}$, where multiplication, addition, and conjugation are defined as follows:
\begin{eqnarray}
\label{eqn:mult} (a,b)(c,d) & = & (ac-d^{*}b,da+bc^{*}), \\ 
\label{eqn:add} (a,b)+(c,d) & = & (a+c,b+d), \\ 
\label{eqn:conj} (a,b)^{\ast} & = & (a^{\ast},-b). 
\end{eqnarray}
Conjugation defines a norm $\left\|a\right\|=\left(aa^{*}\right)^{1/2}$ and the multiplicative inverse for nonzero elements\\ $a^{-1}=a^{*}\slash\left\|a\right\|^2$. Notice that $(a,b)(a,b)^{*} =(\left\|a\right\|^{2}+\left\|b\right\|^2,0)$ and $(a^{*})^{*}=a$.  Dimension of $\mathbb{A}_{n}$ over $\mathbb{R}$ is~$2^{n}$.
\begin{definition}\label{def:divalg}
A nontrivial algebra A over a field is a \emph{division algebra} if for any nonzero $a\in A$ and any $b\in A$ there is a unique $x\in A$ such that $ax=b$ and a unique $y\in A$ such that $ya=b$.  
\end{definition}
\begin{definition}\label{def:ndivalg}
A \emph{normed division algebra} A is a division algebra over the real or complex numbers which is a normed vector space, with norm $\left\|\cdot\right\|$ satisfying $\left\|xy\right\|=\left\|x\right\|\left\|y\right\|$ for all $x,y\in A$.
\end{definition}
\begin{theorem}[Hurwitz, 1898 \cite{Hurwitz:1898}]\label{thm:hurwitz}
The only normed division algebras over $\mathbb{R}$ are $\mathbb{A}_{0}=\mathbb{R}$ (real numbers), $\mathbb{A}_{1}=\mathbb{C}$ (complex numbers), $\mathbb{A}_{2}=\mathbb{H}$ (quaternions) and $\mathbb{A}_{3}=\mathbb{O}$ (octonions).
\end{theorem}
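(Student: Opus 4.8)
The plan is to prove Hurwitz's theorem by the \emph{doubling} method, which dovetails with the Cayley--Dickson construction already set up. Throughout, let $A$ be a normed division algebra over $\mathbb{R}$ with multiplicative norm $\|\cdot\|$. The first preliminary step is to extract the linear-algebraic content of the composition law. Since $A$ is a division algebra, for fixed nonzero $a$ the left and right multiplications are bijective, and a short argument produces a two-sided identity $e$; normalizing gives $\|e\|=1$. A standard consequence of the composition law together with the unit is that the norm comes from an inner product $\langle x,y\rangle = \tfrac12(\|x+y\|^2 - \|x\|^2 - \|y\|^2)$. Polarizing the identity $\|xy\|^2 = \|x\|^2\|y\|^2$ then yields the bilinear relations $\langle xy, xz\rangle = \|x\|^2\langle y,z\rangle$ and $\langle xz, yz\rangle = \|z\|^2\langle x,y\rangle$, together with the exchange identity $\langle xy, zw\rangle + \langle xw, zy\rangle = 2\langle x,z\rangle\langle y,w\rangle$.

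Next I introduce conjugation intrinsically by $\bar x = 2\langle x,e\rangle e - x$ and verify, from the relations above, the usual composition-algebra facts: $x\bar x = \bar x x = \|x\|^2 e$, $\overline{xy} = \bar y\,\bar x$, and the alternative laws $x(xy) = (xx)y$ and $(yx)x = y(xx)$. In particular every $x$ satisfies a real quadratic equation $x^2 - 2\langle x,e\rangle x + \|x\|^2 e = 0$, so $A$ is a quadratic alternative algebra; this is what makes the subsequent bookkeeping tractable.

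The decisive ingredient is the doubling lemma. Let $B \subseteq A$ be a finite-dimensional subalgebra containing $e$ and closed under conjugation, with $B \neq A$. Choose a unit vector $i \in B^\perp$, possible because $A = B \oplus B^\perp$. I claim that $C := B \oplus Bi$ is again a subalgebra closed under conjugation, that the sum is orthogonal and direct, and that under the identification $(a,b)\mapsto a + bi$ the product is exactly the Cayley--Dickson formula \eqref{eqn:mult}. Establishing the required identities --- that $i^2 = -e$, that $Bi \perp B$, and that $(bi)c = (b\bar c)i$, $b(ci) = (cb)i$, $(bi)(ci) = -\bar c\,b$ for $b,c\in B$ --- from the polarized relations is the computational heart of the argument, and I expect this to be the main obstacle, since it demands careful manipulation of the exchange identity and of conjugation. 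Because $C$ inherits the composition norm of $A$, it is itself a composition algebra; hence the Cayley--Dickson double of $B$ carries a multiplicative norm. A direct expansion of $\|(a,b)(c,d)\|$ using the norm on $B$ then shows that this multiplicativity holds for all arguments \emph{if and only if} $B$ is associative.

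Finally I run the doubling upward from $\mathbb{R}e$. As long as the current subalgebra $B$ is proper, the lemma enlarges it to a composition subalgebra of twice the dimension isomorphic to the double of $B$, producing the chain $\mathbb{R}=\mathbb{A}_0 \subset \mathbb{C}=\mathbb{A}_1 \subset \mathbb{H}=\mathbb{A}_2 \subset \mathbb{O}=\mathbb{A}_3$; each of $\mathbb{R},\mathbb{C},\mathbb{H}$ is associative, so every doubling up to this point indeed lands in a composition algebra. The process must terminate at $\mathbb{O}$: the octonions are not associative, so by the converse half of the lemma their Cayley--Dickson double cannot be a composition algebra, and therefore no unit vector can be orthogonal to the copy of $\mathbb{O}$ inside $A$. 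Hence $\mathbb{O}^\perp = 0$, which forces $A$ to coincide with whichever of $\mathbb{A}_0,\mathbb{A}_1,\mathbb{A}_2,\mathbb{A}_3$ the chain reached, and no normed division algebra over $\mathbb{R}$ of larger dimension can exist.
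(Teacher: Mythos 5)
The paper offers no proof of this statement: it is quoted from Hurwitz with a citation, so there is no internal argument to measure you against. Your outline is the standard modern proof via composition algebras and the doubling lemma, and its architecture is sound. Once one has a unital algebra whose norm is Euclidean and satisfies $\left\|xy\right\|=\left\|x\right\|\left\|y\right\|$, the polarized relations, the conjugation $\bar x=2\langle x,e\rangle e-x$, the verification that $B\oplus Bi$ reproduces exactly the product \eqref{eqn:mult}, the equivalence ``the double of $B$ is a composition algebra iff $B$ is associative,'' and the resulting termination of the chain $\mathbb{R}\subset\mathbb{C}\subset\mathbb{H}\subset\mathbb{O}$ are all correct and constitute the canonical argument. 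Your identities $(bi)c=(b\bar c)i$, $b(ci)=(cb)i$, $(bi)(ci)=-\bar c\,b$ do match the sign conventions of \eqref{eqn:mult}.

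Two preliminary steps that you wave through, however, conceal the only genuinely delicate content relative to the definitions actually adopted in Section~\ref{sec:intro}. First, Definition~\ref{def:divalg} does not posit a unit, and ``a short argument produces a two-sided identity'' is not available without associativity: from $au=a$ you cannot transport $u$ past other elements. The standard repair is Kaplansky's isotopy trick (replace the product by $x*y=\left(R_a^{-1}x\right)\left(L_a^{-1}y\right)$ for a fixed norm-one $a$), but this only shows the algebra is \emph{isotopic} to a unital one; taken literally, the theorem as stated is then a classification up to isotopy, since for example $(x,y)\mapsto x\bar y$ on $\mathbb{C}$ is a normed division algebra with no two-sided unit and hence not isomorphic to $\mathbb{C}$. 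Second, Definition~\ref{def:ndivalg} assumes only a vector-space norm, and the assertion that multiplicativity forces the norm to come from an inner product is precisely the nontrivial Urbanik--Wright refinement; you should either prove it or strengthen the hypothesis to a Euclidean norm, as essentially every textbook treatment does. With those two points either supplied or assumed, the remainder of your doubling argument is complete and correct.
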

\section{Cayley-Dickson loops and their properties}\label{sec:loops}
We will consider multiplicative structures that arise from the Cayley-Dickson doubling process.
\begin{definition}\label{def:loop}
A \emph{loop} is a nonempty set $L$ with binary operation $\cdot$ such that
\begin{enumerate}
	\item there is a neutral element $1\in L$ such that $1\cdot x=x\cdot 1=x$ for all $x\in L$, 
	\item for all $x,z\in L$ there is a unique $y$ such that $x\cdot y=z$,
	\item for all $y,z\in L$ there is a unique $x$ such that $x\cdot y=z$.
\end{enumerate}
\end{definition}
Define Cayley-Dickson loops $\left(Q_{n},\cdot \right)$ inductively as follows: 
\begin{eqnarray*}\label{eqn:CD1}
 Q_{0} & = & \{\pm(1)\}, Q_{1} = \{\pm(1,0),\pm(1,1)\},\\ 
 Q_{n} & = & \{\pm(x_{1},x_{2},\ldots,x_{n},0),\pm(x_{1},x_{2},\ldots,x_{n},1)\left|\right.\pm(x_{1},x_{2},\ldots,x_{n})\in Q_{n-1}\}, n\in \mathbb{N}. 
\end{eqnarray*}
In a compact form, 
\begin{equation}\label{eqn:CD2}
 Q_{0}=\{\pm(1)\},\ \  Q_{n}=\{\pm(x,0),\pm(x,1)\left|\right.\pm x\in Q_{n-1}\}.
\end{equation}
Using this approach, multiplication (\ref{eqn:mult}) becomes 
\begin{eqnarray} 
\label{eqn:mult0} (x,0)(y,0) & = & (xy,0),\\
\label{eqn:mult1} (x,0)(y,1) & = & (yx,1), \\
\label{eqn:mult2} (x,1)(y,0) & = & (xy^{*},1),\\
(x,1)(y,1) & = & (-y^{*}x,0). 
\end{eqnarray}
Conjugation (\ref{eqn:conj}) modifies to
\begin{eqnarray} \label{eqn:conj1}
(x,0)^{*} & = & (x^{*},0),\\ 
(x,1)^{*} & = & (-x,1). 
\end{eqnarray}
All elements of $Q_{n}$ have norm one due to the fact that 
\begin{equation*} \label{eqn:norm1}
\left\|(x,x_{n+1})\right\| = \left\|x\right\|=\left\|(x_{1},\ldots,x_{n})\right\|=\ldots=\left\|x_{1}\right\|=1, 
\end{equation*} 
however, not all the elements of $\mathbb{A}_{n}$ of norm one are in $Q_{n}$. The Cayley-Dickson loop is the multiplicative closure of basic elements of the corresponding Cayley-Dickson algebra. The first few examples of the Cayley-Dickson loops are the group of real units $\mathbb{R}_{2}$ (abelian); the group of complex integral units $\mathbb{C}_{4}$ (abelian); the group of quaternion integral units $\mathbb{H}_{8}$ (not abelian); the octonion loop $\mathbb{O}_{16}$ (Moufang); the sedenion loop $\mathbb{S}_{32}$ (not Moufang); the trigintaduonion loop $\mathbb{T}_{64}$. \\
We write $Q_{n}$ or $Q$ instead of $\left(Q_{n},\cdot \right)$ further in the text. \\
Denote the loop generated by elements $x_{1},\ldots,x_{n}$ of a loop $L$ by $\left\langle x_{1},\ldots,x_{n}\right\rangle$. Denote by $i_{n}$ an element $(1_{Q_{n-1}},1)$ of $Q_{n}$. Such element $i_n$ satisfies $Q_{n}=\left\langle Q_{n-1},i_{n}\right\rangle$, thus $Q_{n}=\left\langle i_{1},i_{2},\ldots, i_{n}\right\rangle$. We call $i_{1},i_{2},\ldots, i_{n}$ the \emph{canonical generators} of $Q_{n}$. Any $x\in Q_{n}$ can be written as 
\begin{equation*}\label{eqn:x}
x=\pm\prod_{j=1}^n i_{j}^{\epsilon_{j}},\ \ \epsilon_{j}\in \{0,1\}.
\end{equation*}
For example,   
\begin{eqnarray*}
 Q_0=\mathbb{R}_{2} & = & \{1,-1\},\\
 Q_1=\mathbb{C}_{4} & = & \{(1,0),-(1,0),(1,1),-(1,1)\}=\left\langle  i_1\right\rangle = \{1,-1,i_1,-i_1\},\\
 Q_2=\mathbb{H}_{8} & = & \pm \{(1,0,0),(1,1,0),(1,0,1),(1,1,1)\} = \left\langle i_1,i_2\right\rangle = \pm \{1,i_{1},i_{2},i_{1}i_{2}\}.                                
\end{eqnarray*}
Next, we show some properties of the Cayley-Dickson loops.
\begin{theorem} [\cite{Culbert:07}]\label{thm:culbert}
Any pair of elements of a Cayley-Dickson loop generates a subgroup of the quaternion group. In particular, a pair $x,y$ generates a real group when $x=\pm1$ and $y=\pm1$; a complex group when either $x=\pm1$, or $y=\pm1$ (but not both), or $x=\pm y\neq\pm 1$; a quaternion group otherwise. 
\end{theorem}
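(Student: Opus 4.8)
The plan is to reduce everything to the arithmetic of the canonical basis elements $\pm e_{v}$, indexed by $v\in\mathbb{F}_2^{n}$, using the reduced multiplication rules (\ref{eqn:mult0})--(\ref{eqn:mult2}) together with $(x,1)(y,1)=(-y^{*}x,0)$ and the conjugation formulas. First I would record three preliminary facts. (i) Conjugation is an anti-automorphism, $(uv)^{*}=v^{*}u^{*}$, proved by induction on $n$ from (\ref{eqn:mult}) and (\ref{eqn:conj}). (ii) The element $-1$ is central of order $2$, being a scalar. (iii) For every $x\in Q_{n}$ with $x\neq\pm1$ one has $x^{*}=-x$, hence $x^{2}=-1$ and $x^{-1}=x^{*}=-x$: the identity $x^{*}=-x$ is proved by induction on $n$ from (\ref{eqn:conj1}) (elements with last coordinate $1$ are immediate, those with last coordinate $0$ follow from the inductive hypothesis), and then $xx^{*}=\left\|x\right\|^{2}=1$ forces $x^{2}=-1$; note also that $a^{*}=a^{-1}$ for \emph{all} $a$, since every element has norm one. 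I would also record the structural fact, immediate from (\ref{eqn:mult0})--(\ref{eqn:mult2}), that a product of two basis elements is again a basis element up to sign, so that $Q_{n}$ is a central extension $1\to\{\pm1\}\to Q_{n}\to\mathbb{F}_2^{n}\to 0$ with $e_{u}e_{v}=\pm e_{u+v}$. This controls which coset each product lands in and bounds the size of two-generated subloops.

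Next I would dispose of the degenerate cases. If $x=\pm1$ and $y=\pm1$ then $\langle x,y\rangle\subseteq\{\pm1\}$, a real group. If exactly one of them, say $y$, equals $\pm1$, or if $x=\pm y\neq\pm1$, then $\langle x,y\rangle=\langle x\rangle=\{1,-1,x,-x\}$ by (iii), a cyclic group of order $4$, that is, a complex group. These use only $x^{2}=-1$ and power-associativity of the ambient algebra.

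For the main case assume $x,y\neq\pm1$ and $x\neq\pm y$. I would first show the two elements anticommute. Since $x\neq\pm y$, their images in $\mathbb{F}_2^{n}$ are distinct and nonzero, so $xy\neq\pm1$ (i.e. $xy$ is non-real); indeed $xy=\pm1$ would force $y=\pm x^{-1}=\pm x$. Hence $(xy)^{*}=-xy$ by (iii), while (i) and (iii) give $(xy)^{*}=y^{*}x^{*}=(-y)(-x)=yx$, so $yx=-xy$. Using cancellation in the loop together with $x,y,xy\notin\{\pm1\}$ and $x\neq\pm y$, the eight elements $\{\pm1,\pm x,\pm y,\pm xy\}$ are pairwise distinct, and by the central-extension structure every product of $x$'s and $y$'s lies among them; thus $\langle x,y\rangle$ is exactly this set.

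It remains to prove that this eight-element subloop is associative, whereupon the relations $x^{2}=y^{2}=(xy)^{2}=-1$ and $yx=-xy$ identify it with the quaternion group. This is the crux, and it cannot be inherited from alternativity of the ambient algebra, which already fails for $\mathbb{S}_{32}$. Instead I would establish the left and right alternative laws and the flexible law \emph{for basis elements}, namely $x(xy)=x^{2}y$, $(yx)x=yx^{2}$, and $(xy)x=x(yx)$, by induction on the doubling step: writing $x=(a,\alpha)$, $y=(b,\beta)$ with $\alpha,\beta\in\{0,1\}$ and splitting into the four multiplication cases, each product reduces (via $a^{*}=a^{-1}$ and (i)) to a product in $Q_{n-1}$, where the inductive hypothesis supplies associativity. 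For instance, when $\alpha=\beta=0$ one gets $x(xy)=(a(ab),0)=((aa)b,0)=(-b,0)=-y$, and the other three cases are analogous. Granting these identities, the whole multiplication table of $\{\pm1,\pm x,\pm y,\pm xy\}$ closes up: with $z=xy$ one computes $xz=-y$, $zx=y$, $yz=x$, $zy=-x$, $z^{2}=-1$, so that $x,y,z$ play the roles of $i,j,k$ and the table is exactly that of $Q_{8}$. The main obstacle is precisely this inductive verification of the alternative and flexible laws for basis elements; everything else is bookkeeping resting on (i)--(iii) and the central-extension structure.
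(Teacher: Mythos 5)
The paper offers no proof of this theorem: it is imported verbatim from \cite{Culbert:07} and used as a black box, so there is nothing internal to compare your argument against. Judged on its own, your proposal is correct and self-contained. The reduction to the central extension $\{\pm1\}\to Q_{n}\to(\mathbb{Z}_{2})^{n}$ in which the product of two basis elements is again a basis element up to sign, the degenerate cases, and the derivation of $yx=-xy$ from $(xy)^{*}=y^{*}x^{*}$ together with $w^{*}=-w$ for non-real $w$ are all sound, and you correctly isolate the genuine content: associativity of $\{\pm1,\pm x,\pm y,\pm xy\}$ cannot be borrowed from alternativity of $\mathbb{A}_{n}$ (which fails from the sedenions on), but the left and right alternative laws restricted to basis elements do survive the doubling. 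I checked all four cases of that induction: each reduces, via $(uv)^{*}=v^{*}u^{*}$ and $aa^{*}=1$, to an instance of the left or the right alternative law in $Q_{n-1}$; notably the left law at level $n$ already requires the right law at level $n-1$ (e.g.\ the case $x=(a,0)$, $y=(b,1)$ gives $x(xy)=((ba)a,1)$ versus $x^{2}y=(b(aa),1)$), so the two laws must be carried as a \emph{simultaneous} induction hypothesis --- your phrase ``the inductive hypothesis supplies associativity'' should be sharpened to say exactly this, since $Q_{n-1}$ itself is of course not associative. Once $x(xy)=x^{2}y$, $(xy)y=xy^{2}$ and anticommutativity are available, the multiplication table of the eight-element subloop is forced to coincide with that of $\mathbb{H}_{8}$; flexibility need not be verified separately, since $(xy)x=-x(xy)=y$ already follows from anticommutativity of $x$ and $xy$.
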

Lemma~\ref{lemma:quasioct} extends Theorem~\ref{thm:culbert} and shows that any three elements of a Cayley-Dickson loop generate a subloop of either the octonion loop, or the quasioctonion loop.
\begin{definition}\label{def:diassoc}
A loop~$L$ is \emph{diassociative} if every pair of elements of $L$ generates a group in~$L$. 
\end{definition}
\begin{corollary}\label{cor:diassoc}
Every Cayley-Dickson loop is diassociative.
\end{corollary}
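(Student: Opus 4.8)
The plan is to derive this directly from Theorem~\ref{thm:culbert}. By Definition~\ref{def:diassoc}, showing that $Q_{n}$ is diassociative amounts to verifying that for every pair $x,y\in Q_{n}$, the subloop $\left\langle x,y\right\rangle$ is in fact a group. Theorem~\ref{thm:culbert} asserts that any such pair generates a subgroup of the quaternion group $\mathbb{H}_{8}$, and a subgroup of a group is itself a group. Thus the conclusion is immediate once Theorem~\ref{thm:culbert} is invoked.

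Concretely, I would fix an arbitrary Cayley-Dickson loop $Q_{n}$ and arbitrary elements $x,y\in Q_{n}$, and apply Theorem~\ref{thm:culbert} to obtain that $\left\langle x,y\right\rangle$ is isomorphic to one of the subgroups of $\mathbb{H}_{8}$ enumerated there (the trivial group, $\mathbb{R}_{2}$, $\mathbb{C}_{4}$, or $\mathbb{H}_{8}$ itself, depending on whether $x$ or $y$ equals $\pm 1$ and on the relation between $x$ and $y$). In every case the generated subloop carries an associative operation, so $\left\langle x,y\right\rangle$ is a group. Since $x,y$ were arbitrary, every two-generated subloop of $Q_{n}$ is a group, which is precisely the statement that $Q_{n}$ is diassociative.

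There is no genuine obstacle here; the corollary is a restatement of Theorem~\ref{thm:culbert} in the language of Definition~\ref{def:diassoc}. The only point worth making explicit in the write-up is that ``generates a subgroup of the quaternion group'' already encodes associativity on the generated subloop, so no separate verification of the loop axioms or of the associative law is required beyond citing the theorem.
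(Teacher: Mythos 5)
Your proposal is correct and follows the paper's own argument exactly: both deduce diassociativity immediately from Theorem~\ref{thm:culbert}, noting that a subgroup of the (associative) quaternion group is a group, so every two-generated subloop of $Q_{n}$ is a group. No further comment is needed.
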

\begin{proof}
The quaternion group $\mathbb{H}_{8}$ is associative and the rest follows from Theorem \ref{thm:culbert}.
\end{proof}
\begin{definition}\label{def:commutant}
\emph{Commutant} of a loop L, denoted by C(L), is the set of elements that commute with every element of L. More precisely, $C(L)=$\{$a \left| \right. ax=xa$, $\forall x\in L$\}.
\end{definition}
\begin{definition}\label{def:nucleus}
\emph{Nucleus} of a loop L, denoted by N(L), is the set of elements that associate with all elements of L. More precisely, $N(L)=$\{$a \left| \right. a\cdot xy=ax\cdot y, xa\cdot y=x\cdot ay, xy\cdot a=x\cdot ya$, $\forall x,y\in L$\}.
\end{definition}
\begin{definition}\label{def:center}
\emph{Center} of a loop L, denoted by Z(L), is the set of elements that commute and associate with every element of L. More precisely, $Z(L)=C(L)\cap N(L)$.
\end{definition}
\begin{definition}[\cite{Pflugfelder:90} p.13]\label{def:normal}
Let S be a subloop of a loop L. Then S is called a \emph{normal subloop} if for all $x,y \in L$
\begin{align}
	\nonumber xS&=Sx,\\
	\nonumber (xS)y&=x(Sy),\\
	\nonumber x(yS)&=(xy)S.
\end{align}
\end{definition}
\begin{definition}\label{def:assoc_sub}
\emph{Associator subloop} of a loop $L$, denoted by $A(L)$, is the smallest normal subloop of L such that $L/A(L)$ is a group.
\end{definition}
\begin{definition}\label{def:derived_sub}
\emph{Derived subloop} of a loop $L$, denoted by $L'$, is the smallest normal subloop of L such that $L/L'$ is an abelian group.
\end{definition} 
\begin{lemma}\label{lemma:center}
Let S be a subloop of $Q_{n}$. The following holds
\begin{enumerate}
\item Center of $S$, $Z(S)=\left\{1,-1\right\}$ when $\left|S\right|>4$ and $Z\left(S\right)=S$ otherwise. 
\item Associator subloop of $S$, $A(S)=Z(S)$ when $\left|S\right|>8$ and $A(S)=1$ otherwise. 
\item Derived subloop of $S$, $S'=Z(S)$ when $\left|S\right|>4$ and $S'=1$ otherwise.  
\end{enumerate}
\end{lemma}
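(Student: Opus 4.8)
The plan is to reduce everything to two structural facts that follow from Theorem~\ref{thm:culbert} and Lemma~\ref{lemma:quasioct}, and then to read off the three claims from the quotient $S/\{1,-1\}$. First I would record the observations used throughout. Since $\mathbb{A}_n$ is an algebra over $\mathbb{R}$, the element $-1$ is a scalar multiple of the identity and hence lies in $Z(Q_n)$; in particular $\{1,-1\}$ is a central, and therefore normal, subloop of every subloop $S$. By Theorem~\ref{thm:culbert} every $x\ne\pm1$ generates a complex group $\langle x\rangle=\{1,-1,x,-x\}$, so $x^2=-1$ and $x$ has order $4$; consequently the only subloops of order at most $4$ are $\{1\}$, $\{1,-1\}$ and the cyclic groups $\{1,-1,x,-x\}\cong\mathbb{C}_4$ (an order-$3$ subloop is impossible), all of which are abelian. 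Theorem~\ref{thm:culbert} also shows that any two elements either commute or anticommute, so every commutator is $\pm1$, while Lemma~\ref{lemma:quasioct} (associators lie in the octonion or quasioctonion loop) gives that every associator is $\pm1$. Passing to $S/\{1,-1\}$ therefore kills all commutators and associators, so $S/\{1,-1\}$ is an elementary abelian $2$-group; this quotient does most of the work.

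For part (1) the containment $\{1,-1\}\subseteq Z(S)$ is immediate from centrality of $-1$. When $|S|\le4$ the subloop is abelian by the classification above, so $Z(S)=S$. When $|S|>4$, any $a\in Z(S)\setminus\{\pm1\}$ would have to commute with every element of $S$; but $|S|>4$ forces some $y\notin\{1,-1,a,-a\}$, and then $\langle a,y\rangle$ is a quaternion group by Theorem~\ref{thm:culbert}, inside which $a$ and $y$ anticommute, a contradiction. Hence $Z(S)=\{1,-1\}$.

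Part (3) is then short: for $|S|\le4$, $S$ is abelian and $S'=1$; for $|S|>4$, the quotient $S/\{1,-1\}$ is abelian, so $S'\subseteq\{1,-1\}$, while an anticommuting pair $x,y\in S$ gives $(xy)(yx)^{-1}=-1\in S'$, whence $S'=\{1,-1\}=Z(S)$. For part (2) I would first show that every subloop of order at most $8$ is a group: orders $1,2,4$ are handled above, and if $|S|=8$ I pick $x\ne\pm1$ and $y\notin\langle x\rangle$, so that the quaternion group $\langle x,y\rangle$ already has order $8$ and must equal $S$, which is therefore associative and gives $A(S)=1$. For $|S|>8$ the upper bound $A(S)\subseteq\{1,-1\}$ holds because $S/\{1,-1\}$ is a group, and for the lower bound it suffices to produce a single nonvanishing associator, which is automatically equal to $-1$ and so forces $-1\in A(S)$.

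The main obstacle is this last point: showing that $|S|>8$ forces $S$ to be non-associative. I would argue by contradiction. If $S$ were a group, then $S/\{1,-1\}$ would be an elementary abelian $2$-group of order at least $8$, hence of rank at least $3$; choosing $x,y,z\in S$ whose images are independent, these are pairwise anticommuting and satisfy $z\notin\{\pm1,\pm x,\pm y,\pm xy\}$. Then $\langle x,y,z\rangle/\{1,-1\}$ contains three independent images, so $|\langle x,y,z\rangle|\ge16$, while Lemma~\ref{lemma:quasioct} bounds this order by $16$; hence $\langle x,y,z\rangle$ is the full octonion or quasioctonion loop and is non-associative, contradicting that $S$ is a group. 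This proves that associative subloops have order at most $8$ and completes part (2). The step I expect to demand the most care is the reliance on Lemma~\ref{lemma:quasioct} to simultaneously cap $|\langle x,y,z\rangle|$ at $16$ and identify it as one of the two non-associative order-$16$ loops; everything else follows mechanically from Theorem~\ref{thm:culbert} and the elementary abelian quotient.
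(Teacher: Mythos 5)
Your proof is correct and follows the same overall strategy as the paper's: use Theorem~\ref{thm:culbert} to classify the subloops of order at most $8$, show $Z(S)=\{1,-1\}$ for $|S|>4$ by exhibiting an anticommuting pair, and bound $A(S)$ and $S'$ above by $\{1,-1\}$ using that $S/\{1,-1\}$ is an abelian group. The one place you genuinely go beyond the paper is the claim that $|S|>8$ forces $S$ to be non-associative: the paper simply asserts ``$A(S)\neq 1$ since $S$ is not a group,'' whereas you justify it by choosing three elements with independent images in the elementary abelian quotient, noting that they generate a subloop of order exactly $16$, and invoking Lemma~\ref{lemma:quasioct} to identify that subloop with $\mathbb{O}_{16}$ or $\tilde{\mathbb{O}}_{16}$, neither of which is a group; this fills a real gap. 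Two small caveats. First, the facts you lean on (all associators equal $\pm1$, and the order-$16$ classification) are Lemma~\ref{lemma:comm-assoc} and Lemma~\ref{lemma:quasioct}, which appear \emph{after} Lemma~\ref{lemma:center} in the paper; the dependency is not circular, since their proofs rest only on Theorem~\ref{thm:culbert}, diassociativity and Moufang's theorem, but it should be flagged (the paper's own appeal to ``$S/Z(S)$ is a group'' hides the same forward reference). Second, in the order-at-most-$8$ case you should note that orders $5$, $6$, $7$ cannot occur, since any subloop with more than four elements already contains a copy of $\mathbb{H}_{8}$; with that remark your $|S|=8$ argument fully covers the claim that $|S|\le 8$ implies $S$ is a group.
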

\begin{proof}
\begin{enumerate}
\item Let $S$ be a subloop of $Q_{n}$. By Theorem \ref {thm:culbert}, $S\leq \mathbb{C}_4$ when $\left|S\right|\leq4$; $\mathbb{C}_{4}$ is an abelian group, hence $Z\left(S\right)=S$. Let $\left|S\right|>4$. By Theorem \ref {thm:culbert}, $\left\langle 1,x\right\rangle\leq \mathbb{C}_{4}$ and $\left\langle -1,x\right\rangle\leq \mathbb{C}_{4}$, $\mathbb{C}_{4}$ is abelian and therefore $\{1,-1\}\in C(S)$. Let $x\in S\backslash\{\pm1\}$, choose an element $y\notin \{\pm1,\pm x\}$. Then $\left\langle x,y\right\rangle\cong \mathbb{H}_{8}$ by Theorem \ref {thm:culbert}, and $[x,y]=-1$. It follows that $C(S)=\{1,-1\}$. Also, $\left\langle 1,x,y\right\rangle \leq \mathbb{H}_{8}$ and $\left\langle -1,x,y\right\rangle \leq \mathbb{H}_{8}$, therefore $[1,x,y]=1$ and $[-1,x,y]=1$ for any $x,y\in S$, and $\{1,-1\}\in N(S)$. It follows that $Z\left
(S\right)=\left\{1,-1\right\}$.  
\item Let $\left|S\right|>8$. A group $S/Z(S)$ is abelian, hence $A(S)\leq Z(S)$. Also, $A(S)\neq 1$ since $S$ is not a group, so $A(S)=Z(S)$. Let $\left|S\right|\leq 8$, then $S\leq\mathbb{H}_{8}$ and $\mathbb{H}_{8}$ is a group, so $A(S)=1$.
\item Let $\left|S\right|>4$. A group $S/Z(S)$ is abelian, hence $S'\leq Z(S)$. Also, $S'\neq 1$ since $S$ is not an abelian group, so $S'= Z(S)$. Let $\left|S\right|\leq 4$, then $S\leq\mathbb{C}_{4}$ and $\mathbb{C}_{4}$ is an abelian group, so $S'=1$.
\qedhere
\end{enumerate}
\end{proof}
\begin{proposition}\label{prop:prop}
Let  $Q_{n}$ be a Cayley-Dickson loop. The following holds  
\begin{enumerate}
	\item Conjugates of the elements of $Q_{n}$ are $x^{*}=-x$ for $x\in  Q_{n}\backslash\left\{1,-1\right\}$, $1^{*}=1$, $(-1)^{*}=-1$. \label{eqn:conjugates}
	\item Orders of the elements of $Q_{n}$ are $\left|x\right|=4$ for $x\in  Q_{n}\backslash\left\{1,-1\right\}$, $\left|1\right|=1$, $\left|-1\right|=2$. \label{eqn:order}
	\item Inverses of the elements of $Q_{n}$ are $x^{-1}=x^{*}$ for all $x \in  Q_{n}$.
	\item Size of $Q_{n}$ is $2^{n+1}$.
	\item For $k\leq n$, $Q_{k}$ embeds into $Q_{n}$, $k\in \mathbb{N}$.
\end{enumerate}
\end{proposition}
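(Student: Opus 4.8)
The plan is to prove the five parts using induction on $n$ together with three inputs: the reduced product and conjugation rules \eqref{eqn:mult0}--\eqref{eqn:conj1}, Theorem~\ref{thm:culbert}, and the fact noted above that every element of $Q_n$ has norm one. I would begin with parts (4) and (5), which are the most elementary. For (4), the recursive description \eqref{eqn:CD2} exhibits each element of $Q_n$ uniquely as a pair $(x,\epsilon)$ with $x\in Q_{n-1}$ and $\epsilon\in\{0,1\}$, so $\lvert Q_n\rvert=2\lvert Q_{n-1}\rvert$; starting from $\lvert Q_0\rvert=2$ this gives $\lvert Q_n\rvert=2^{n+1}$ by induction. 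For (5) it suffices to treat the single step $Q_{n-1}\hookrightarrow Q_n$: the map $x\mapsto(x,0)$ is injective, sends $1$ to $1$, and by \eqref{eqn:mult0} satisfies $(x,0)(y,0)=(xy,0)$, so it is a loop monomorphism; composing $n-k$ such maps embeds $Q_k$ into $Q_n$.

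Next I would establish (3) and (2), which are nearly immediate from the stated machinery. For (3), conjugation sends $Q_n$ into itself (by \eqref{eqn:conj1} and the rule $(x,1)^\ast=(-x,1)$, together with an inductive check that $x^\ast\in Q_{n-1}$), and since every $x\in Q_n$ has norm one the algebra formula $x^{-1}=x^\ast/\lVert x\rVert^2$ collapses to $x^{-1}=x^\ast$; as both $x$ and $x^\ast$ lie in the loop, this is also the inverse computed in $Q_n$. For (2), the values $\lvert 1\rvert=1$ and $\lvert -1\rvert=2$ are clear, and for $x\neq\pm1$ Theorem~\ref{thm:culbert} applied to the pair $1,x$ gives $\langle x\rangle=\langle 1,x\rangle\cong\mathbb{C}_4$, in which every element other than $\pm1$ has order $4$; hence $\lvert x\rvert=4$.

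Finally, I would prove (1). The quickest route combines (2) and (3): for $x\neq\pm1$ part (2) gives $\lvert x\rvert=4$, so $x^2$ has order two and therefore equals $-1$, whence $x^\ast=x^{-1}=x^3=x^2\cdot x=(-1)\,x=-x$, while $1^\ast=1^{-1}=1$ and $(-1)^\ast=(-1)^{-1}=-1$. A self-contained alternative is direct induction on $n$ from the conjugation rules: writing $x=(y,\epsilon)$, the case $\epsilon=1$ yields $x^\ast=(-y,1)=-x$ at once, while the case $\epsilon=0$ with $y\neq\pm1$ yields $x^\ast=(y^\ast,0)=(-y,0)=-x$ by the inductive hypothesis, the remaining subcases being exactly $x=\pm1$. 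The only point demanding care in either approach is the negation convention $-(y,\epsilon)=(-y,\epsilon)$, which I would record first by computing $(-1)(y,\epsilon)$ from \eqref{eqn:mult0} and \eqref{eqn:mult1}; once this is in hand none of the five parts presents a genuine obstacle, the mild bookkeeping of signs in the inductive step for (1) being the most delicate point.
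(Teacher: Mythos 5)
Your proposal is correct, but its logical organization differs from the paper's in an interesting way. The paper proves the five parts in order: part (1) (conjugates) by induction on $n$ from the rules $(x,0)^{*}=(x^{*},0)$ and $(x,1)^{*}=(-x,1)$; part (2) by a second induction showing $x^{2}=-1$ directly from \eqref{eqn:mult0}; part (3) purely as a corollary of (1) and (2) via the one-line computation $x^{*}x=(-x)x=-(xx)=1=x(-x)=xx^{*}$; and parts (4), (5) exactly as you do. You instead import two external inputs --- Theorem~\ref{thm:culbert} to get $\left\langle 1,x\right\rangle\cong\mathbb{C}_{4}$ and hence $\left|x\right|=4$, and the ambient algebra identity $x^{-1}=x^{*}/\left\|x\right\|^{2}$ together with $\left\|x\right\|=1$ to get (3) --- and then recover (1) from (2) and (3), reversing the paper's dependency. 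Both routes are sound and non-circular (your (3) does not rely on (1), so deriving (1) from (2) and (3) is legitimate, and your order-$4$ argument correctly uses that $x$ generates a cyclic group of order $4$). What the paper's approach buys is self-containedness: everything reduces to the doubling formulas, with no appeal to Culbert's theorem or to properties of the ambient algebra $\mathbb{A}_{n}$; what yours buys is brevity in (2) and (3). Two small cautions: in (3) the loop-theoretic claim $x^{-1}=x^{*}$ means a \emph{two-sided} inverse, and the displayed identity $(a,b)(a,b)^{*}=(\left\|a\right\|^{2}+\left\|b\right\|^{2},0)$ only gives $xx^{*}=1$ directly, so you are leaning on the intro's assertion that $a^{*}/\left\|a\right\|^{2}$ is a two-sided inverse (or you should check $x^{*}x=1$ separately, as the paper does); and your decision to record the convention $-(y,\epsilon)=(-y,\epsilon)$ before the induction for (1) is a good instinct --- the paper uses it silently. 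You also note the direct induction for (1) as an alternative, which is precisely the paper's argument, so no idea is missing.
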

\begin{proof}
\begin{enumerate}
	\item By induction on $n$. In $\mathbb{R}_{2}$, $1\cdot1=-1\cdot(-1)=1$. Suppose $x^{*}=-x$ holds for all $x\in  Q_{n} \backslash \left\{\pm1\right\}$, then in $Q_{n+1}$ by definition $(x,0)^{*}=(x^{*},0)=(-x,0)=-(x,0)$ and $(x,1)^{*}=(-x,1)=-(x,1)$. 
	\item By induction on $n$. In $\mathbb{C}_{4}$, $(1,0)(1,0)=(1,0)$ and $(1,1)(1,1)=-(1,0)$. Suppose $x^2=-1$ holds for all $x\in  Q_{n}\backslash\left\{\pm1\right\}$, then in $Q_{n+1}$  $(x,0)(x,0)=(xx,0)=(-1,0)$ and $(x,1)(x,1)=(-x^{*}x,0)=(xx,0)=(-1,0)$.
	\item Follows from \ref{eqn:conjugates}. and \ref{eqn:order}. $x^{*}x=(-x)x=-(xx)=1=-(xx)=x(-x)=xx^{*}$ when $x\neq \pm 1$ and $(\pm1)^2=1$. 
	\item By definition.
	\item $Q_{k}\cong\left\{(x,0)\left|\right.(x,0)\in Q_{k+1}\right\}$, $k\in \mathbb{N}$. 
	\qedhere
\end{enumerate}
\end{proof}
\begin{definition}\label{def:ip}
A loop $L$ is an \emph{inverse property loop} if for every $x\in L$ there is $x^{-1}\in L$ such that $x^{-1}(xy)=y=(yx)x^{-1}$ for every $y\in L$.
\end{definition}
\begin{corollary}\label{cor:ip}
Cayley-Dickson loop is an inverse property loop.
\end{corollary}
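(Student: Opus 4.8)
The plan is to derive the inverse property directly from diassociativity (Corollary~\ref{cor:diassoc}), which is the natural tool since the inverse property is a statement about expressions in the two elements $x$ and $y$ only. First I would fix arbitrary $x,y\in Q_n$ and take $x^{-1}=x^{*}$ as the candidate inverse, which is legitimate because Proposition~\ref{prop:prop} guarantees that $x^{*}$ is a genuine two-sided inverse of $x$ in $Q_n$ (and that it coincides with a power of $x$, so it lives in $\langle x\rangle$).

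The key observation is that both defining identities, $x^{-1}(xy)=y$ and $(yx)x^{-1}=y$, involve only the three elements $x^{-1},x,y$, all of which lie in the subloop $\langle x,y\rangle$: indeed $x^{-1}\in\langle x\rangle\subseteq\langle x,y\rangle$, and $xy$, $yx$ are products of $x$ and $y$. By Corollary~\ref{cor:diassoc}, $\langle x,y\rangle$ is a group, so associativity is available for every product of its elements. Working inside this group I would simply compute
\begin{equation*}
x^{-1}(xy)=(x^{-1}x)y=1\cdot y=y,\qquad (yx)x^{-1}=y(xx^{-1})=y\cdot 1=y,
\end{equation*}
which is exactly the inverse property identity of Definition~\ref{def:ip}.

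Since $x,y\in Q_n$ were arbitrary, this establishes that $Q_n$ is an inverse property loop. I do not expect any real obstacle here: the entire argument reduces the global statement to a computation inside a single associative subloop, so the only point requiring care is the bookkeeping that the inverse $x^{-1}$ appearing in Definition~\ref{def:ip} is the same element as the two-sided inverse $x^{*}$ supplied by Proposition~\ref{prop:prop}, and that this element genuinely belongs to $\langle x,y\rangle$ so that the group associativity may be applied to it. Once those membership facts are noted, the two identities follow by associativity and cancellation.
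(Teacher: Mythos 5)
Your proof is correct and takes essentially the same route as the paper: both identify $x^{-1}=x^{*}$ via Proposition~\ref{prop:prop} and then invoke diassociativity (Corollary~\ref{cor:diassoc}) to reassociate $x^{*}(xy)=(x^{*}x)y=y=(yx)x^{*}$ inside the group $\left\langle x,y\right\rangle$. The only difference is that you spell out the membership bookkeeping ($x^{*}\in\left\langle x\right\rangle\subseteq\left\langle x,y\right\rangle$) that the paper leaves implicit.
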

\begin{proof}
$x^{-1}=x^{*}$ by Proposition \ref{prop:prop}. $x^{*}(xy)=(x^{*}x)y=y=y(xx^{*})=(yx)x^{*}$ by Corollary~\ref{cor:diassoc}.
\end{proof}
\begin{definition}\label{def:comm}
Let L be a loop. For any $x, y \in L$ define \emph{commutator} $[x,y]$ by $xy=(yx)[x,y]$.
\end{definition}
\begin{definition}\label{def:assoc}
Let L be a loop. For any $x, y, z \in L$ define \emph{associator} $[x,y,z]$ by \\$xy\cdot z=(x\cdot yz)[x,y,z]$.
\end{definition}
\begin{theorem}[Moufang \cite{Moufang:35}]\label{thm:moufang}
Let $\left(M,\cdot \right)$ be a Moufang loop. If  $[x,y,z]=1$ for some $x,y,z\in M$, then $x,y,z$ generate a group in $\left(M,\cdot \right)$.
\end{theorem}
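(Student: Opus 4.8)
The plan is to prove that the subloop $H=\left\langle x,y,z\right\rangle$ is in fact a group, which by Definition~\ref{def:assoc} is equivalent to showing that $[a,b,c]=1$ for \emph{all} $a,b,c\in H$. The hypothesis supplies only the single equation $[x,y,z]=1$ among the three generators, so the entire content of the theorem is a propagation argument: triviality of one associator must be shown to force triviality of every associator built from $x$, $y$, and $z$.

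Before attacking the propagation I would assemble the standard toolkit that follows directly from the defining Moufang identities, namely the flexible law $x(yx)=(xy)x$, the two alternative laws $x(xy)=(xx)y$ and $(yx)x=y(xx)$, and the inverse property of Definition~\ref{def:ip}. The crucial preliminary result is \emph{diassociativity} in the sense of Definition~\ref{def:diassoc}: any two elements of a Moufang loop generate a group. This is customarily established first, directly from the Moufang identities, and it is the scaffolding that allows every two-letter subword of a long product to be manipulated as though it lived inside an associative group.

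With diassociativity in hand, the heart of the proof is to show that the set of triples drawn from $H$ on which the associator vanishes is closed under the loop operations once it contains $\left\{x,y,z\right\}$; equivalently, that the associator subloop of $H$ (Definition~\ref{def:assoc_sub}) is trivial. The mechanism is a disciplined sequence of rewrites: using the Moufang identities together with diassociativity, one verifies that replacing any entry of an associator by a product or an inverse of already-controlled elements preserves the value $1$. Carrying this out reduces $[a,b,c]$, for arbitrary words $a,b,c$ in the generators, back to $[x,y,z]$ through a chain of such substitutions.

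The main obstacle is precisely this closure step. There is no short conceptual shortcut: the original argument of Moufang, later streamlined by Bol and Bruck, requires a substantial and delicate case analysis of how associators transform under the Moufang identities. A more structural alternative recasts the problem in terms of autotopisms of the loop, translating the associator identities into statements about the multiplication group and its triality, which organizes the bookkeeping but does not remove the underlying combinatorial labor. I would therefore expect essentially all of the real difficulty to reside in this propagation, with the preliminary identities and diassociativity being comparatively routine once the Moufang identities are available.
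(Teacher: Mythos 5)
This theorem is not proved in the paper at all: it is quoted as a classical result of Moufang, with the citation \cite{Moufang:35} standing in for the argument. So there is no internal proof to compare yours against; the only question is whether your proposal would constitute a proof on its own, and it would not. What you have written is an accurate description of the \emph{shape} of the known proofs --- establish diassociativity, then propagate the single hypothesis $[x,y,z]=1$ to the vanishing of every associator in $\left\langle x,y,z\right\rangle$ --- but the propagation step, which you yourself identify as carrying ``essentially all of the real difficulty,'' is simply announced rather than carried out. Asserting that ``replacing any entry of an associator by a product or an inverse of already-controlled elements preserves the value $1$'' is precisely the content of Moufang's theorem; stating it as a step to be verified, without the verification, leaves the entire theorem unproved. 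The delicate case analysis (in Moufang's original paper, in Bruck's treatment, or in the more recent streamlined proofs) is not a routine bookkeeping exercise that can be waved through: naive substitution arguments fail, and the known proofs require carefully chosen auxiliary identities and a nontrivial induction on word length.

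There is also a smaller slip at the outset. You say that $H=\left\langle x,y,z\right\rangle$ being a group ``is equivalent to showing that $[a,b,c]=1$ for all $a,b,c\in H$ \emph{by Definition~\ref{def:assoc}},'' and you invoke Definition~\ref{def:assoc_sub} as if the triviality of the associator subloop of $H$ were the same thing. The equivalence of ``$H$ is a group'' with ``all associators in $H$ are trivial'' is correct, but it follows from the definition of associativity, not from the definition of the associator subloop (which is defined as the smallest \emph{normal} subloop with group quotient, a subtly different object in a general loop). This does not damage the overall plan, but it signals that the reduction you are relying on needs to be stated more carefully before the (missing) main argument could even begin.
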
 
\begin{lemma}\label{lemma:comm-assoc}
Let $x,y,z$ be elements of $Q_{n}.$ The following holds
\begin{enumerate}
	\item Commutator $[x,y]=-1$ when $\left\langle x,y\right\rangle\cong \mathbb{H}_{8}$ and $[x,y]=1$ when $\left\langle x,y\right\rangle<\mathbb{H}_{8}$.
	\item Associator $[x,y,z]=1$ or $[x,y,z]=-1$. In particular, $[x,y,z]=1$ when $\left\langle x,y,z\right\rangle\leq \mathbb{H}_{8}$ and $[x,y,z]=-1$ when $\left\langle x,y,z\right\rangle\cong \mathbb{O}_{16}$. 
\end{enumerate}
\end{lemma}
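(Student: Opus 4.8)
The plan is to reduce both statements to the single structural fact that the quotient $\bar{Q}=Q_n/\{1,-1\}$ is an abelian group, which I would extract from Lemma~\ref{lemma:center}. Indeed, for $|Q_n|>4$ Lemma~\ref{lemma:center}(3) gives $Q_n'=\{1,-1\}$, so by Definition~\ref{def:derived_sub} the quotient $\bar Q$ is an abelian group; for the small cases $n\le 1$ the loop $Q_n$ is already an abelian group and both statements are trivial. Since $\{1,-1\}$ is a normal subloop, the canonical projection $\pi\colon Q_n\to\bar Q$ is a loop homomorphism into an associative, commutative structure. This is the lever for everything: any two products of the same elements of $Q_n$ that agree after applying $\pi$ must differ by a factor in $\{1,-1\}$.

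For part (1), I would first rewrite the commutator using the inverse property. From Definition~\ref{def:comm}, $xy=(yx)[x,y]$, and applying $(yx)^{-1}$ on the left together with Corollary~\ref{cor:ip} yields $[x,y]=(yx)^{-1}(xy)$. Since $\pi(xy)=\pi(yx)$ in the commutative quotient, $xy$ and $yx$ lie in the same coset of $\{1,-1\}$, so $xy=\pm yx$ and hence $[x,y]\in\{1,-1\}$. To pin down the value I invoke Theorem~\ref{thm:culbert}: $\langle x,y\rangle$ is a subgroup of $\mathbb{H}_8$. If the subgroup is proper it is real or complex, hence abelian, so $xy=yx$ and $[x,y]=1$; if $\langle x,y\rangle\cong\mathbb{H}_8$ it is nonabelian, so $xy\neq yx$, forcing $[x,y]=-1$.

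For part (2) I would run the parallel argument for the associator. Definition~\ref{def:assoc} gives $xy\cdot z=(x\cdot yz)[x,y,z]$, and the inverse property gives $[x,y,z]=(x\cdot yz)^{-1}(xy\cdot z)$. Applying $\pi$ and using associativity of $\bar Q$ shows $\pi(xy\cdot z)=\pi(x\cdot yz)$, so the two associations differ only by a sign and $[x,y,z]\in\{1,-1\}$. When $\langle x,y,z\rangle\le\mathbb{H}_8$ the three elements lie in a group, so they associate and $[x,y,z]=1$. The remaining case $\langle x,y,z\rangle\cong\mathbb{O}_{16}$ carries the only real content: here I would apply Moufang's Theorem~\ref{thm:moufang} to the Moufang loop $\mathbb{O}_{16}$ itself. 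Since the octonion loop is not associative it is not a group, so $x,y,z$ do not generate a group in $\mathbb{O}_{16}$; by the contrapositive of Theorem~\ref{thm:moufang}, $[x,y,z]\neq 1$, and combined with $[x,y,z]\in\{1,-1\}$ this gives $[x,y,z]=-1$.

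The point needing the most care is the justification that $\pi$ is a homomorphism onto an associative quotient, that is, that $\{1,-1\}$ really is normal and that passing to $\bar Q$ is legitimate, together with the correct bookkeeping of Moufang's theorem in part (2): it must be applied inside the Moufang loop $\mathbb{O}_{16}=\langle x,y,z\rangle$, not inside $Q_n$, which need not be Moufang once $n\ge 4$ (the sedenion loop and beyond). Everything else is routine manipulation with the inverse property.
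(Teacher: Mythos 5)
Your argument is correct, and the endgame (associativity of $\mathbb{H}_{8}$ forces $[x,y,z]=1$; Moufang's Theorem~\ref{thm:moufang} applied inside the Moufang loop $\mathbb{O}_{16}$, which is not a group, forces $[x,y,z]=-1$) coincides exactly with the paper's. Where you genuinely diverge is in establishing $[x,y,z]\in\{1,-1\}$: the paper proves this by induction on $n$ directly from the doubling formulas, checking that both bracketings $(x,x_{n+1})(y,y_{n+1})\cdot(z,z_{n+1})$ and $(x,x_{n+1})\cdot(y,y_{n+1})(z,z_{n+1})$ have the form (a product of $x,y,z$ each occurring once, up to sign, $x_{n+1}+y_{n+1}+z_{n+1} \bmod 2$), whereas you pass to the quotient $Q_{n}/\{1,-1\}$ and use that it is an abelian group. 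Your route is shorter and more conceptual, but it shifts the entire burden onto the abelian-quotient fact, and there you should be careful about citing Lemma~\ref{lemma:center}(3): the paper's own proof of Lemma~\ref{lemma:center}(2)--(3) begins by asserting ``a group $S/Z(S)$ is abelian'' with no independent justification, and the natural justification of that assertion is precisely that commutators and associators lie in $\{1,-1\}$ --- the statement you are trying to prove. To keep your argument non-circular you should ground the claim that $Q_{n}/\{1,-1\}\cong(\mathbb{Z}_{2})^{n}$ in the construction itself (every element is $\pm\prod_{j}i_{j}^{\epsilon_{j}}$ and, by induction on the doubling, multiplication adds exponent vectors mod $2$ up to sign, which is essentially Remark~\ref{cor:Zn}), rather than in Lemma~\ref{lemma:center}(3); at that point your quotient argument and the paper's induction are doing the same work in different clothing. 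For part (1) your quotient step is actually redundant: Theorem~\ref{thm:culbert} already places $\langle x,y\rangle$ inside $\mathbb{H}_{8}$, whose derived subgroup is $\{1,-1\}$, which is all the paper uses; your additional observation that two commuting generators would make $\langle x,y\rangle$ abelian is a nice way to make the ``nonabelian $\Rightarrow [x,y]=-1$'' step fully explicit.
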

\begin{proof}
\begin{enumerate}
	\item By Theorem \ref{thm:culbert}, $\left\langle x,y\right\rangle< \mathbb{H}_{8}$ when either $x=\pm1$, or $y=\pm1$, or both, or $x=\pm y$, moreover, $\left\langle x,y\right\rangle< \mathbb{H}_{8}$ implies that $\left\langle x,y\right\rangle\leq \mathbb{C}_{4}$. The complex group $\mathbb{C}_{4}$ is abelian, hence $[x,y]=1$ when $\left\langle x,y\right\rangle<\mathbb{H}_{8}$. Next, suppose $\left\langle x,y\right\rangle\cong\mathbb{H}_{8}$, i.e., $x\neq\pm1$, $y\neq\pm1$, $x\neq\pm y$. The quaternion group $\mathbb{H}_{8}$ is not abelian, therefore $\left[x,y\right]=-1$.	
	\item By induction on $n$. Holds on elements of $\mathbb{R}_{2}$. Suppose $[x,y,z]=1$ or $[x,y,z]=-1$ $\forall x,y,z \in  Q_{n}$. Then in $Q_{n+1}$, $(x,x_{n+1})(y,y_{n+1})\cdot(z,z_{n+1})=(f(x,y,z), (x_{n+1}+y_{n+1}+z_{n+1})$ mod $2)$, where $x_{n+1},y_{n+1},z_{n+1}\in\left\{0,1\right\}$ and $f(x,y,z)$ is some product of $x,y,z,x^{*},y^{*},z^{*}$ and possibly $-1$. Recall that $x^{*}=x$ or $x^{*}=-x$ for $x\in Q_{n}$, therefore $f(x,y,z)$ is in fact the product of $x,y,z$, each occuring exactly once, and possibly $-1$. Similarly, $(x,x_{n+1})\cdot(y,y_{n+1})(z,z_{n+1})=(g(x,y,z), (x_{n+1}+y_{n+1}+z_{n+1})$ mod $2),$ where $g(x,y,z)$ is some product of $x,y,z$, each occuring exactly once, and possibly $-1$. In other words, $f(x,y,z)$ and $g(x,y,z)$ only differ by a sign, which shows that either 
\[ [(x,x_{n+1}),(y,y_{n+1}),(z,z_{n+1})]=1\mbox{ or }[(x,x_{n+1}),(y,y_{n+1}),(z,z_{n+1})]=-1.\]
Finally, $\mathbb{H}_{8}$ is associative, therefore $[x,y,z]=1$ when $\left\langle x,y,z\right\rangle\leq\mathbb{H}_{8}$.\\
$\mathbb{O}_{16}$ is a Moufang loop and not a group, therefore by Moufang's Theorem $[x,y,z]=-1$ when $\left\langle x,y,z\right\rangle\cong\mathbb{O}_{16}$. 
\qedhere
\end{enumerate}
\end{proof}
Let $\mathbb{Z}_{2}$ be a cyclic group of order 2.
\begin{remark}\label{cor:Zn}
A group $Q_{n}/\{1,-1\}$ is abelian and isomorphic to (multiplicative) $(\mathbb{Z}_{2})^{n}$.
\end{remark}
\begin{proof}
Follows from Lemma~\ref{lemma:center} and construction~\eqref{eqn:CD2}.
\end{proof}
\begin{lemma}\label{lemma:order}
Let $B$ be a subloop of $Q_{n}$. The following holds
\begin{enumerate}
	\item If $B\neq1$ and $x\in  Q_{n}\backslash B$, then $\left|\left\langle B,x\right\rangle\right|=2\left|B\right|$. \label{lemma:order1}
	\item If $B=1$ and $x\in  Q_{n}\backslash B$, then $\left\langle B,x\right\rangle=\left\{1,-1,x,-x\right\}$.
	\item Any $n$ elements of a Cayley-Dickson loop generate a subloop of size $2^k,$ $k\leq n+1$. \label{lemma:order3}
	\item The size of $B$ is $2^{m}$ for some $m\leq n$.
\end{enumerate}
\end{lemma}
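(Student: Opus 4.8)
The plan is to push everything through the quotient homomorphism $\pi\colon Q_n\to Q_n/\{1,-1\}\cong(\mathbb{Z}_2)^n$ supplied by Remark~\ref{cor:Zn}. The one structural fact I would record first is that every nontrivial subloop $S\leq Q_n$ contains $-1$: any $x\in S\setminus\{1\}$ has order $2$ or $4$ by Proposition~\ref{prop:prop}, and in either case $x^{2}\in\{-1,1\}$ forces $-1\in S$ (the power $x^{2}$ is unambiguous and lies in $S$ by diassociativity, Corollary~\ref{cor:diassoc}). Consequently $S$ is a union of fibers $\{s,-s\}$ of $\pi$, the restriction $\pi|_S$ has kernel $\{1,-1\}$ and image a subgroup $\pi(S)\leq(\mathbb{Z}_2)^n$, so that $|S|=2\,|\pi(S)|$. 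This identity, together with the fact that $(\mathbb{Z}_2)^n$ is an elementary abelian $2$-group, is the engine for all four parts.

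Part (2) is immediate: if $B=1$ then $\langle B,x\rangle=\langle x\rangle$, and Proposition~\ref{prop:prop} gives $x^{2}=-1$ (for $x\neq\pm1$) and $x^{3}=-x$, whence $\langle x\rangle=\{1,-1,x,-x\}$, the cases $x=\pm1$ collapsing this set correctly. For part (1), since $B\neq1$ we have $-1\in B$, so $B$ is closed under negation; hence $x\notin B$ implies $\pi(x)\notin\pi(B)$ (otherwise $x\in\{b,-b\}\subseteq B$). In $(\mathbb{Z}_2)^n$ adjoining an element outside a subgroup exactly doubles it, so $|\langle\pi(B),\pi(x)\rangle|=2\,|\pi(B)|$. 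Applying $\pi$ to the generating set gives $\pi(\langle B,x\rangle)=\langle\pi(B),\pi(x)\rangle$, and since $\langle B,x\rangle$ is again nontrivial I would conclude $|\langle B,x\rangle|=2\,|\pi(\langle B,x\rangle)|=2\cdot 2\,|\pi(B)|=2\,|B|$.

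Parts (3) and (4) then follow formally. Part (3) is an induction on the number of generators: having generated $B=\langle y_1,\dots,y_n\rangle$ of size $2^{k}$ with $k\leq n+1$, adjoining $y_{n+1}$ either leaves $B$ unchanged, or doubles it by part (1) (when $B\neq1$), or produces a set of size $\leq 4$ by part (2) (when $B=1$); in every case the result is a power of $2$ with exponent $\leq n+2$. Part (4) drops straight out of the kernel--image count: for nontrivial $B$, $|B|=2\,|\pi(B)|=2^{\,j+1}$ where $\pi(B)\leq(\mathbb{Z}_2)^n$ has order $2^{j}$, so $|B|$ is a power of $2$ (bounded by $|Q_n|=2^{n+1}$, with $m\leq n$ as soon as $B$ is a proper subloop). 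The only genuinely delicate points, and the ones I would write out with care, are the loop-theoretic justifications that survive the absence of full associativity: that $\pi|_S$ has uniform fibers of size $2$ so that $|S|=2\,|\pi(S)|$, and that images of generated subloops are generated by the images, $\pi(\langle B,x\rangle)=\langle\pi(B),\pi(x)\rangle$. Both rest on $\{1,-1\}$ being central (Lemma~\ref{lemma:center}) and on $\pi$ being a loop homomorphism onto a group, but they are precisely where a careless argument would silently assume associativity.
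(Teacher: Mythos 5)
Your proposal is correct and follows essentially the same route as the paper: both pass to the quotient $Q_{n}/\{1,-1\}\cong(\mathbb{Z}_{2})^{n}$, observe that adjoining a vector outside a subgroup of an elementary abelian $2$-group doubles it, and then run the same induction for parts (3) and (4). You simply make explicit several steps the paper leaves implicit (that every nontrivial subloop contains $-1$, that the fibers of $\pi$ have uniform size $2$, and that $\pi(\langle B,x\rangle)=\langle\pi(B),\pi(x)\rangle$), which is a welcome tightening rather than a different argument.
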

\begin{proof} 
\begin{enumerate}
	\item Let $1\neq B\leq  Q_{n}$ and $x\in  Q_{n}\backslash B$. By Lemma \ref{lemma:center}, $Z(Q_{n})\leq B$ and $Z(Q_{n})\leq \left\langle B,x\right\rangle$, then $B/Z(Q_{n})$ and $\left\langle B,x\right\rangle/Z(Q_{n})$ are subgroups of $Q_{n}/Z(Q_{n})\cong(\mathbb{Z}_{2})^n$. It follows that $\left|\left\langle B,x\right\rangle/Z(Q_{n})\right|=2\left|B/Z(Q_{n})\right|$ because we work in the vector space $(\mathbb{Z}_{2})^n$ and we added another vector.
	\item Let $B=1$. If $x\neq-1$ then $x^2=-1$ by Proposition \ref{prop:prop} and $\left\langle B,x\right\rangle=\left\langle x\right\rangle=\left\{1,-1,x,-x\right\}$. Also, $\left\langle B,-1\right\rangle=\left\{1,-1\right\}$.  
	\item By induction on n. The size of $\left\langle x\right\rangle$ is $1,2$ or $4$. Suppose $n$ elements of a Cayley-Dickson loop generate a subloop $B$ of size $2^{k}$ for some $k\leq n+1$. Add an element $x$ to $B$. If $x\in B$, then $\left|\left\langle B,x\right\rangle\right|=\left|B\right|=2^{k}$, $k\leq n+1\leq n+2$. If $x\notin B$, then $\left|\left\langle B,x\right\rangle\right|=2\left|B\right|=2^{k+1}$, $k+1\leq n+2$, by \ref{lemma:order1}.
	\item Follows from \ref{lemma:order3}.
	\qedhere
\end{enumerate}
\end{proof}
\section{Cayley-Dickson loops are Hamiltonian}\label{sec:hamiltonian}
We show that the Cayley-Dickson loops are Hamiltonian. Norton \cite{Norton:52} formulated a number of theorems characterizing diassociative Hamiltonian loops and showed that the octonion loop is Hamiltonian, however, at that time he did not study the generalized Cayley-Dickson loops. It is showed computationally in~\cite{Cawagas:09} that $\mathbb{T}_{64}$ is Hamiltonian. 
\begin{definition}\label{def:hamiltonian}
A \emph{Hamiltonian loop} is a loop in which every subloop is normal.
\end{definition}
\begin{theorem}\label{thm:hamiltonian}
Cayley-Dickson loop $Q_{n}$ is Hamiltonian.
\end{theorem}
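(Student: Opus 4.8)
The plan is to check the three identities of Definition~\ref{def:normal} directly for an arbitrary subloop $S\le Q_n$. First I would dispose of the degenerate cases: if $S=\{1\}$ the conditions are trivial, and if $S=\{1,-1\}$ then $S\subseteq Z(Q_n)$ by Lemma~\ref{lemma:center}, so $S$ is central and hence normal. For every remaining subloop I note that $S$ contains some element $x\neq\pm1$, whence $x^{2}=-1\in S$ by Proposition~\ref{prop:prop}; thus every nontrivial subloop contains the central pair $\{1,-1\}$. The key structural input is that $-1$ is central and that, by Lemma~\ref{lemma:comm-assoc}, every commutator $[x,y]$ and every associator $[x,y,z]$ in $Q_n$ equals $1$ or $-1$.

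With $\{1,-1\}\subseteq S$ fixed, the heart of the argument is the observation that the two sides of each normality identity differ only by such a central sign, which is then absorbed back into $S$. For the first condition, the commutator identity $xs=(sx)[x,s]$ gives $xs=\pm(sx)$; since $-1$ is central, $-(sx)=(-s)x$ with $-s\in S$, so $xs\in Sx$ and therefore $xS\subseteq Sx$. Because left and right translations are bijections of the finite loop $Q_n$, we have $|xS|=|S|=|Sx|$, and the inclusion forces $xS=Sx$. For the second and third conditions I would use the associator in the same way: the identity $(xs)y=(x(sy))[x,s,y]$ yields $(xs)y=\pm x(sy)=x((\pm s)y)\in x(Sy)$, and the identity $(xy)s=(x(ys))[x,y,s]$ yields $x(ys)=\pm(xy)s=(xy)(\pm s)\in(xy)S$. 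In each case equality again follows by a cardinality count, since all the sets involved have cardinality $|S|$, being images of $S$ under composites of left and right translations.

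The routine but essential bookkeeping step is justifying that the sign can be moved freely, namely that $-(uv)=(-u)v=u(-v)$ for all $u,v\in Q_n$; this is immediate from $-1\in Z(Q_n)$ together with diassociativity (Corollary~\ref{cor:diassoc}), which lets me regroup the products containing the central factor. I expect this to be the only delicate point: one must track exactly which factor the sign attaches to and confirm that $-s$ again lies in $S$, which it does since $-1\in S$. No genuine obstacle remains once the commutator and associator values are pinned down by Lemma~\ref{lemma:comm-assoc}; the proof is then a uniform ``off by a central sign'' argument applied to each of the three conditions, with finiteness upgrading the inclusions to equalities.
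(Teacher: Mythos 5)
Your proof is correct and takes essentially the same route as the paper: both reduce each normality condition to the fact that commutators and associators in $Q_n$ are central signs $\pm 1$ (Lemma~\ref{lemma:comm-assoc}) which can be absorbed into $S$ because $-1\in S$ for every nontrivial subloop. You merely spell out the degenerate cases and the cardinality argument upgrading the inclusions to equalities, details the paper's terser proof leaves implicit.
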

\begin{proof} Let $S$ be a subloop of $Q_{n}$, $s\in S$, $x,y\in  Q_{n}$. Using Lemma \ref{lemma:comm-assoc} and Lemma \ref{lemma:center}, 
\begin{eqnarray}
	\nonumber xs & = & [x,s]sx\in\left\{sx,-sx\right\}\subseteq Sx, \\
	\nonumber (xs)y & = & [x,s,y]x(sy)\in\left\{x(sy),-x(sy)\right\}\subseteq x(Sy), \\ 
	\nonumber x(ys) & = & [x,y,s](xy)s\in\left\{(xy)s,-(xy)s\right\}\subseteq (xy)S.
\qedhere
\end{eqnarray}
\end{proof}
\begin{theorem}\textbf{(Norton)}\label{thm:norton2}
If $A$ is an abelian group with elements of odd order, $T$ is an abelian group with exponent 2, and $K$ is a diassociative loop such that
		\begin{enumerate}
			\item elements of K have order 1, 2 or 4,
			\item there exist elements $x,y$ in $K$ such that $\left\langle x,y\right\rangle\cong \mathbb{H}_{8}$,
			\item every element of K of order 2 is in the center,
			\item if $x,y,z\in K$ are of order 4, then $x^2=y^2=z^2$, \\$xy=d\cdot yx$ where $d=1$ or $d=x^2$, \\and $xy\cdot z=h(x\cdot yz)$ where $h=1$ or $h=x^2$,  
		\end{enumerate}
then their direct product $A\times T\times K$ is a diassociative Hamiltonian loop.
\end{theorem}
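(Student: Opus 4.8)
The plan is to prove the two assertions separately — that $L:=A\times T\times K$ is diassociative, and that it is Hamiltonian — with essentially all of the work going into normality. Throughout I write elements of $L$ as triples $(a,t,k)$. By hypothesis~(4) all elements of order $4$ in $K$ share a common square; call it $e$, an element of order $2$, and set $\epsilon=(1,1,e)\in L$. Diassociativity is the easy half: given $u=(a_1,t_1,k_1)$ and $v=(a_2,t_2,k_2)$, the subloop $\langle u,v\rangle$ is contained in $\langle a_1,a_2\rangle\times\langle t_1,t_2\rangle\times\langle k_1,k_2\rangle$, whose first two factors are abelian subgroups of the groups $A,T$ and whose third factor is a group because $K$ is diassociative. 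A direct product of groups is associative, and a subloop of an associative loop is a group; hence $\langle u,v\rangle$ is a group and $L$ is diassociative. In particular $L$ is power-associative, so powers $w^{m}$ are unambiguous and are computed componentwise.

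Two structural facts drive the rest. First, commutators and associators in $L$ are computed componentwise and, since $A$ and $T$ are abelian groups, live entirely in the $K$-coordinate: $[u,v]=(1,1,[k_1,k_2])$, and likewise for associators. By hypothesis~(4) the $K$-commutator and $K$-associator of order-$4$ elements lie in $\{1,e\}$, while hypothesis~(3) makes every element of $K$ of order $\le 2$ central, so any commutator or associator involving such an element is trivial. Consequently every commutator and associator of $L$ lies in $\{1,\epsilon\}$, and a nontrivial one forces each of its entries to be an order-$4$ element of $K$. Second, $\epsilon$ is central in $L$, since $e\in Z(K)$ by hypothesis~(3) and the other coordinates are identities. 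The pivotal point is an absorption lemma: if a subloop $S\le L$ contains an element $s=(a',t',k')$ whose $K$-coordinate $k'$ has order $4$, then $\epsilon\in S$. Indeed $s^{2}=(a'^{2},1,e)$ using $t'^{2}=1$ (exponent $2$) and $k'^{2}=e$; if $r$ is the odd order of $a'^{2}$ in $A$, then power-associativity gives $s^{2r}=((a'^{2})^{r},1,e^{r})=(1,1,e)=\epsilon$, so $\epsilon\in S$ as a power of $s$.

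With these in hand, normality of an arbitrary subloop $S$ follows as in the proof of Theorem~\ref{thm:hamiltonian}. For $s\in S$ and $x,y\in L$, the conventions of Definitions~\ref{def:comm} and~\ref{def:assoc} give $xs=(sx)[x,s]$, $(xs)y=(x(sy))[x,s,y]$, and $(xy)s=(x(ys))[x,y,s]$. If the relevant factor is $1$, the three identities of Definition~\ref{def:normal} hold on the nose. Otherwise the factor is $\epsilon$, which by the componentwise description forces the $K$-coordinate of $s$ to have order $4$; the absorption lemma then puts $\epsilon\in S$, and centrality of $\epsilon$ lets me slide it into $S$: for instance $xs=(sx)\epsilon=(s\epsilon)x\in Sx$, and similarly $(xs)y=x((s\epsilon)y)\in x(Sy)$ and $x(ys)=(xy)(s\epsilon)\in(xy)S$, with $s\epsilon\in S$. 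The reverse inclusions follow by symmetry (or, in the finite case, by counting, since $t\mapsto t\epsilon$ permutes $S$). Hence every subloop of $L$ is normal, and $L$ is a diassociative Hamiltonian loop.

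The main obstacle is the absorption lemma together with the bookkeeping that a nontrivial commutator or associator genuinely involves an order-$4$ element of $K$ — this is exactly where hypotheses~(3) and~(4) are used, and where the mixed arithmetic of the three factors (odd order in $A$, exponent $2$ in $T$, order $4$ in $K$) must be combined to extract precisely the central involution $\epsilon$ and nothing else. Everything else is a routine transcription of the commutator/associator computation already carried out for $Q_{n}$; hypothesis~(2) is needed only to guarantee that order-$4$ elements exist so that $e\neq 1$ and $L$ is genuinely non-associative.
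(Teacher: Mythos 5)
The paper does not prove Theorem~\ref{thm:norton2}: it is quoted from Norton \cite{Norton:52} and used only as an alternative route (with $A=T=1$) to Theorem~\ref{thm:hamiltonian}, so there is no in-paper argument to compare yours against. Your proof is, as far as I can check, correct and self-contained, and it is structured as a generalization of the paper's proof of Theorem~\ref{thm:hamiltonian}: reduce every commutator and associator of $A\times T\times K$ to the single central involution $\epsilon=(1,1,e)$ (hypotheses (3) and (4), plus commutativity and associativity of $A$ and $T$, do this), and then absorb $\epsilon$ into the subloop $S$. The absorption lemma is the genuinely new ingredient your setting requires: in $Q_{n}$ the analogous fact ($-1=s^{2}\in\left\langle s\right\rangle\subseteq S$ whenever $s\neq\pm1$) is immediate and the paper elides it, whereas here you must combine the odd order in $A$, the exponent $2$ in $T$, and $s^{2}=e$ in $K$ to land exactly on $\epsilon$ as a power of $s$; you do this correctly. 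Two small points are worth making explicit: (i) the reverse inclusions in Definition~\ref{def:normal} should be obtained from the symmetric computation (as you note parenthetically) rather than by counting, since $A$ need not be finite; (ii) hypothesis (4) places $d$ and $h$ on the left, while Definitions~\ref{def:comm} and~\ref{def:assoc} place the commutator and associator on the right, and this mismatch is harmless only because $d,h\in\{1,e\}$ are central by hypothesis (3) --- you use this implicitly and it deserves a sentence.
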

Theorem~\ref{thm:norton2} with $A=T=1$ can alternatively be used to establish the result for all Cayley-Dickson loops. 
\section{Automorphism groups of the Cayley-Dickson loops}\label{sec:automorphism}
In this section we study the automorphism groups of the Cayley-Dickson loops. 
\begin{definition}\label{def:autom}
Let L be a loop. A map $\phi:L\mapsto L$ is an \emph{automorphism} if it is a bijective homomorphism. 
\end{definition}
\begin{definition}\label{def:automgrp}
The set of all automorphisms of a loop L forms a group under composition, called the \emph{automorphism group} and denoted by $Aut(L)$. 
\end{definition}
\begin{definition}\label{def:orbit}
Define the \emph{orbit} of a set $X$ under the action of a group $G$ by $O_{G}(X)=\left\{gx\ |\ g\in G,\ x\in X\right\}$.
\end{definition}
\begin{definition}\label{def:stabilizer}
Define the (pointwise) \emph{stabilizer} of a set $X$ in $G$ by $G_{X}=\left\{g\in G\ |\ gx=x,\ x\in X\right\}$.
\end{definition}
\begin{theorem}[Orbit-Stabilizer Theorem \cite{Rotman:10} p.67]\label{thm:orbstab}
Let $G$ be a finite group acting on a finite set $X$, then $|O_{G}(X)|=\left[G:G_{X}\right]=\frac{\left|G\right|}{\left|G_{X}\right|}$.
\end{theorem}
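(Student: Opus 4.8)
The plan is to produce an explicit bijection between the orbit and the set of left cosets of the stabilizer; the index formula is then immediate. It suffices to treat the orbit $O_{G}(x)$ of a single element $x$, so write $G_{x}=\{g\in G : gx=x\}$. First I would check that $G_{x}$ is a subgroup of $G$: it contains the identity because $1\cdot x=x$, it is closed under products because $gx=x$ and $hx=x$ force $(gh)x=g(hx)=gx=x$, and it is closed under inverses because $gx=x$ gives $x=g^{-1}x$. This is exactly what makes the left cosets of $G_{x}$ partition $G$ and lets Lagrange's theorem apply at the end.

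The central step is to define $\Phi:G/G_{x}\to O_{G}(x)$ by $\Phi(gG_{x})=gx$ and to show it is a well-defined bijection. Well-definedness and injectivity are handled by a single chain of equivalences: $gG_{x}=hG_{x}$ holds iff $h^{-1}g\in G_{x}$, iff $(h^{-1}g)x=x$, iff $gx=hx$. Reading this chain from left to right shows that $\Phi$ is well defined, and reading it from right to left shows that $\Phi$ is injective. Surjectivity is immediate from the definition of the orbit, since every element of $O_{G}(x)$ has the form $gx$ for some $g\in G$.

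Having established that $\Phi$ is a bijection, I would conclude $|O_{G}(x)|=|G/G_{x}|=[G:G_{x}]$, and the remaining equality $[G:G_{x}]=|G|/|G_{x}|$ is Lagrange's theorem, valid because $G$ is finite. There is no deep obstacle in this classical argument; the only point that genuinely demands care is verifying that $G_{x}$ really is a subgroup, since everything downstream—the coset partition and the index count—rests on it, together with checking that the equivalence chain invokes only the two action axioms $1\cdot x=x$ and $(gh)x=g(hx)$ and nothing more.
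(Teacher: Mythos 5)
Your proof is correct, but note that the paper does not actually prove this statement: it is quoted as a classical result with a citation to Rotman, and is then applied to single elements $i_{1}$ and $i_{2}$. So there is no in-paper argument to compare against; what you have written is the standard textbook proof via the bijection $gG_{x}\mapsto gx$ between cosets of the stabilizer and points of the orbit, and every step checks out, including the verification that $G_{x}$ is a subgroup and the single equivalence chain that gives both well-definedness and injectivity. One remark worth making: your opening move, ``it suffices to treat the orbit of a single element $x$,'' is not merely a convenience but a necessary correction to the statement as literally printed. For a set $X$ with more than one element, $O_{G}(X)=\{gx : g\in G,\ x\in X\}$ together with the \emph{pointwise} stabilizer $G_{X}$ does not satisfy $|O_{G}(X)|=[G:G_{X}]$ in general (e.g.\ $G=S_{3}$ acting on $\{1,2,3\}$ with $X=\{1,2\}$ gives orbit of size $3$ but trivial pointwise stabilizer, hence index $6$). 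The theorem is true, and is used in the paper, only in the singleton case, which is exactly the case you prove.
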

We use Theorem \ref{thm:orbstab} to find an upper bound on the size of $Aut(\mathbb{C}_{4})$ and $Aut(\mathbb{H}_{8})$. Consider $G=Aut(\mathbb{C}_{4})$. Any automorphism on $G$ fixes $1$ and $-1$, therefore it is only possible for an automorphism to map $i_{1}\mapsto i_{1}$ (e.g., the identity map), and $i_{1}\mapsto -i_{1}$ (e.g., conjugation). The size of the orbit $O_{G}(i_{1})$ is therefore 2. Notice that $G_{\left\{i_{1}\right\}}=G_{\mathbb{C}_{4}}$, since $\mathbb{C}_{4}$ is generated by $i_{1}$. It follows that
\[
\left|G\right| =  \left|O_{G}(i_{1})\right|\cdot\left|G_{\left\{i_{1}\right\}}\right|=\left|O_{G}(i_{1})\right|=2.
\]
Next, let $G=Aut(\mathbb{H}_{8})$. Again, $1$ and $-1$ are fixed by any automorphism and are not in $O_{G}(i_{1})$, therefore the size of $|O_{G}(i_{1})|$ can be at most $|\mathbb{H}_{8}|-2=6$. When $i_{1}$ is stabilized, $\left|G_{\left\{i_{1}\right\}}\right|=\left|O_{G_{\left\{i_{1}\right\}}}(i_{2})\right|\cdot\left|G_{\left\{i_{1},i_{2}\right\}}\right|$, moreover, $G_{\left\{i_{1},i_{2}\right\}}=G_{\mathbb{H}_{8}}$, since $\mathbb{H}_{8}$ is generated by $\{i_{1},i_{2}\}$. The orbit $O_{G_{\left\{i_{1}\right\}}}(i_{2})$ can have the size at most $|\mathbb{H}_{8}|-4=4$, because the set $\{1,-1,i_{1},-i_{1}\}$ is fixed. We have
\begin{equation}\label{eqn:autsize}
\left|G\right| =  \left|O_{G}(i_{1})\right|\cdot\left|G_{\left\{i_{1}\right\}}\right| = \left|O_{G}(i_{1})\right|\cdot\left|O_{G_{\left\{i_{1}\right\}}}(i_{2})\right|\cdot\left|G_{\left\{i_{1},i_{2}\right\}}\right| = \left|O_{G}(i_{1})\right|\cdot\left|O_{G_{\left\{i_{1}\right\}}}(i_{2})\right|\leq 6\cdot 4=24. 
\end{equation}
It has been shown, in fact, (see, e.g., \cite{Zassenhaus:99} p.148), that $Aut(\mathbb{H}_{8})$ is isomorphic to the symmetric group $S_{4}$ of size 24. \\
It has been established in~\cite{KocaKoc:1995} that $Aut(\mathbb{O}_{16})$ has size 1344 and is an extension of the elementary abelian group $(\mathbb{Z}_{2})^3$ of order~8 by the simple group $PSL_{2}(7)$ of order~168. One can use the approach similar to \eqref{eqn:autsize} to see what $Aut(\mathbb{O}_{16})$ looks like. \\
To get an idea about the general case, we calculated the automorphism groups of $\mathbb{S}_{32}$ and $\mathbb{T}_{64}$ using LOOPS package for GAP~\cite{NagyVojt:06}. Summarizing, the sizes of the automorphism groups of the first five Cayley-Dickson loops~are
\begin{eqnarray}
\nonumber \left|\mbox{Aut}\left(\mathbb{C}_{4}\right)\right| & = & 2,\\
\nonumber \left|\mbox{Aut}\left(\mathbb{H}_{8}\right)\right| & = & 24=6\cdot4,\\
\nonumber \left|\mbox{Aut}\left(\mathbb{O}_{16}\right)\right| & = & 1344=14\cdot12\cdot8,\\
\nonumber \left|\mbox{Aut}\left(\mathbb{S}_{32}\right)\right| & = & 2688=2\cdot(14\cdot12\cdot8),\\
\nonumber \left|\mbox{Aut}\left(\mathbb{T}_{64}\right)\right| & = & 5376=2\cdot2\cdot(14\cdot12\cdot8).
\end{eqnarray}
One may notice that the automorphism groups of $\mathbb{C}_{4}$, $\mathbb{H}_{8}$ and $\mathbb{O}_{16}$ are as big as they possibly can be, subject to the obvious structural restrictions in $\mathbb{C}_{4},\mathbb{H}_{8},\mathbb{O}_{16}$, only fixing $\{1,-1\}$ ($1$ is the only element of order $1$, and $-1$ is the only element of order $2$). On the contrary, the automorphism groups of $\mathbb{S}_{32}$ and $\mathbb{T}_{64}$ are only double the size of the preceeding ones. Theorem~\ref{thm:autom} below explains such behavior. We denote $e=(1_{Q_{n-1}},1)\in  Q_{n}$ and use it further in the text.
\begin{theorem}\label{thm:autom}
Let $n\geq 4$. If $\phi: Q_{n}\mapsto  Q_{n}$ is an automorphism and $\psi=\phi\upharpoonright_{Q_{n-1}}$, then 
\begin{enumerate}
	\item $\phi\left(1\right)=1$, $\phi\left(-1\right)=-1$,
	\item $\phi\left(e\right)=e$ or $\phi\left(e\right)=-e$,
	\item $\psi\in Aut(Q_{n-1})$,
	\item $\phi((x,1))=\psi(x)\phi(e),\forall x \in  Q_{n-1}$.
\end{enumerate} 
\end{theorem}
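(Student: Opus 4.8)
The plan is to dispose of the two formal items first and then reduce everything to how $\phi$ acts on the quotient by the center. Part~(1) is immediate: a homomorphism fixes $1$, and by Proposition~\ref{prop:prop} the element $-1$ is the unique element of order $2$, so $\phi$ fixes it. Part~(4) is also essentially formal: specializing~\eqref{eqn:mult1} to $y=1_{Q_{n-1}}$ gives the factorization $(x,0)\cdot e=(x,0)(1_{Q_{n-1}},1)=(x,1)$, so every element of the nontrivial coset is $(x,0)\cdot e$ and hence $\phi((x,1))=\phi((x,0))\,\phi(e)=\psi(x)\phi(e)$ directly from the homomorphism property. Thus the genuine content is parts (2) and (3): that $\phi$ fixes $\{e,-e\}$ and preserves the standard copy of $Q_{n-1}$ (the latter being what makes $\psi=\phi\upharpoonright_{Q_{n-1}}$ an element of $\mathrm{Aut}(Q_{n-1})$).

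To attack these I would pass to $\overline{Q}_{n}=Q_{n}/\{1,-1\}\cong(\mathbb{Z}_{2})^{n}$ from Remark~\ref{cor:Zn}, written additively as $\mathbb{F}_{2}^{n}$. By Lemma~\ref{lemma:comm-assoc} every associator is central and equal to $\pm1$, so $A(\bar x,\bar y,\bar z):=0$ or $1$ according as $[x,y,z]=1$ or $-1$ is a well-defined function on $\overline{Q}_{n}$, and $\phi$ induces a linear isomorphism $\bar\phi$ of $\mathbb{F}_{2}^{n}$ preserving $A$. The one calculation I would carry out by hand uses the rules~\eqref{eqn:mult0}--\eqref{eqn:mult2} with the third slot fixed to $e$: writing $\pi\colon\mathbb{F}_{2}^{n}\to\mathbb{F}_{2}^{n-1}$ for the projection forgetting the last coordinate, one checks in all four sign cases that $[x,y,e]$ reduces to the commutator of the $Q_{n-1}$-parts of $x$ and $y$, so that
\[
A(\bar x,\bar y,\bar e)=1\iff \pi\bar x,\ \pi\bar y\ \text{are linearly independent in }\mathbb{F}_{2}^{n-1},
\]
the last step again by Lemma~\ref{lemma:comm-assoc}. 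In particular $\{\bar x:A(\bar x,\bar y,\bar e)=0\ \forall\bar y\}=\ker\pi=\langle\bar e\rangle$, a distinguished line through $\bar e$.

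From here I would characterize $\bar e$ intrinsically. Since $\bar\phi$ preserves $A$, it preserves both the count $\bar w\mapsto\#\{(\bar x,\bar y):A(\bar x,\bar y,\bar w)=1\}$ and the associated radical $\{\bar x:A(\bar x,\bar y,\bar w)=0\ \forall\bar y\}$; the aim is to show that for $n\ge4$ the vector $\bar e$ is the unique nonzero $\bar w$ realizing the values these invariants take at $e$. Granting this, $\bar\phi(\bar e)=\bar e$, which is exactly part~(2). Feeding $\phi(e)=\pm e$ back into the displayed identity shows that $\pi\circ\bar\phi$ restricts to an isomorphism of $\overline{Q}_{n-1}$ onto $\mathbb{F}_{2}^{n-1}$, whence $\bar e\notin\bar\phi(\overline{Q}_{n-1})$; the remaining step for part~(3) is to upgrade this to $\bar\phi(\overline{Q}_{n-1})=\overline{Q}_{n-1}$, after which $\psi$ is a bijection of $Q_{n-1}$ and so lies in $\mathrm{Aut}(Q_{n-1})$.

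The main obstacle is precisely these two rigidity claims, and both require control of the full form $A$, not just its restriction $A(\cdot,\cdot,\bar e)$. I would get this from a recursion expressing $A_{n}$ in terms of $A_{n-1}$, the commutator form, and the conjugation/squaring data created by the doubling~\eqref{eqn:mult0}--\eqref{eqn:conj1}, and I expect the non-Moufang (quasioctonion) triples supplied by Lemma~\ref{lemma:quasioct} to be exactly what forces $\bar e$ and the hyperplane $\overline{Q}_{n-1}$ to be rigid. Note this rigidity \emph{must} fail at $n=3$, where $Q_{n-1}=\mathbb{H}_{8}$ is not preserved by all of $\mathrm{Aut}(\mathbb{O}_{16})$; the characterization of $\bar e$ can only become unique once quasioctonion subloops appear, i.e.\ for $n\ge4$. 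Proving that the extremal value of the associator count is attained only at $\bar e$, and that the compatible hyperplane is forced to be $\overline{Q}_{n-1}$, is the computational heart of the argument.
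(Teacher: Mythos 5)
Parts (1) and (4) of your proposal are correct and essentially identical to the paper's. The difficulty is that for parts (2) and (3) you have written a plan rather than a proof: you reduce everything to two rigidity claims --- that $\bar e$ is the \emph{unique} nonzero vector of $Q_n/\{\pm1\}\cong\mathbb{F}_2^n$ realizing the values of your invariants (the count of non-associating pairs in the third slot, and the radical $\{\bar x: A(\bar x,\bar y,\bar w)=0\ \forall \bar y\}$), and that the hyperplane $\overline{Q}_{n-1}$ is forced to be $\bar\phi$-invariant --- and then explicitly defer both (``the aim is to show\dots'', ``the computational heart of the argument''). Neither is established, and it is not even clear the invariants you chose are adequate: the radical is $\langle\bar w\rangle$ not only for $\bar w=\bar e$ (for any $w\ne\pm1$ the triple $\langle \pm w, y, w\rangle$ lies in $\mathbb{H}_8$ and associates), and the isomorphism type of a $16$-element subloop $\langle x,y,z\rangle$ is governed by the \emph{pattern} of the three associators $[x,y,z]$, $[x,z,y]$, $[x,y,xz]$ (Lemma~\ref{lemma:quasioct}), not by any single value of $A$, so a counting invariant in one slot would need a separate verification that it separates $\bar e$ from every other nonzero vector for all $n\ge4$. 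Your computation that $A(\bar x,\bar y,\bar e)=1$ exactly when the $Q_{n-1}$-parts of $x,y$ are independent is correct and is in substance one half of what is needed, but it is the easy half.

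The paper closes precisely these two gaps, with loop-theoretic rather than linear-algebraic invariants. For part (2): Lemma~\ref{lemma:aut2} shows every $16$-element subloop containing $e$ is isomorphic to $\mathbb{O}_{16}$ (this is your displayed identity, pushed through all three associators of Lemma~\ref{lemma:quasioct}), while Lemma~\ref{lemma:aut7} shows every $x\notin\{\pm1,\pm e\}$ lies in some copy of $\tilde{\mathbb{O}}_{16}$; together these force $\phi(e)=\pm e$. For part (3): Lemma~\ref{lemma:aut4} classifies the index-$2$ subloops into three types, Lemma~\ref{lemma:aut3} shows subloops of the third type are never isomorphic to $Q_{n-1}$, and a counting argument then shows an element $(x,0)$ lies in strictly more copies of $Q_{n-1}$ than an element $(y,1)$, so $\phi$ preserves the hyperplane $Q_{n-1}$. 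If you want to complete your route you must prove analogues of Lemmas~\ref{lemma:aut7} and~\ref{lemma:aut3}; as it stands, the two statements you ``grant'' are exactly the content of the theorem.
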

We establish several auxiliary results and use them to prove Theorem \ref{thm:autom} at the end of the chapter. The following lemma shows that all subloops of $Q_{n}$ of size $16$ fall into two isomorphism classes. In particular, any such subloop is either isomorphic to $\mathbb{O}_{16}$, the octonion loop, or $\tilde{\mathbb{O}}_{16}$, the quasioctonion loop, described in~\cite{Cawagas:04,Culbert:07}. The octonion loop is Moufang, however, the quasioctonion loop is not. We take $\left\langle i_{1},i_{2},i_{3}\right\rangle=\pm\{1,i_{1},i_{2},i_{1}i_{2},i_{3},i_{1}i_{3},i_{2}i_{3},i_{1}i_{2}i_{3}\}$ as a canonical octonion loop, and 
$\left\langle i_{1},i_{2},i_{3}i_{4}\right\rangle=\pm\{1,i_{1},i_{2},i_{1}i_{2},i_{3}i_{4},i_{1}i_{3}i_{4},i_{2}i_{3}i_{4},i_{1}i_{2}i_{3}i_{4} \}$ as a canonical quasioctonion loop in $\mathbb{S}_{32}$. 
We use LOOPS package for GAP~\cite{NagyVojt:06} in Lemma \ref{lemma:quasioct} and further in the text to establish the isomorphisms between the subloops we construct, and either $\mathbb{O}_{16}$ or $\tilde{\mathbb{O}}_{16}$.	
\begin{lemma}\label{lemma:quasioct}
If $x,y,z$ are elements of $Q_{n}$ such that $\left|\left\langle x,y,z\right\rangle\right|=16$, then either
\begin{equation}
\nonumber \left\langle x,y,z\right\rangle \cong \mathbb{O}_{16} \mbox{ or } \left\langle x,y,z\right\rangle \cong \tilde{\mathbb{O}}_{16}.
\end{equation}
\end{lemma}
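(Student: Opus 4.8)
The plan is to pin down the isomorphism type of $S:=\left\langle x,y,z\right\rangle$ from a small amount of central sign data and then show this data admits exactly two values up to isomorphism. First I would fix the coarse structure. By Lemma~\ref{lemma:order}, $|S|=2^{k}$, and $|S|=16$ forces $k=4$; by Lemma~\ref{lemma:center} together with Remark~\ref{cor:Zn}, the pair $\{1,-1\}$ is the center of $S$ and $S/\{1,-1\}$ is an order-$8$ subgroup of $(\mathbb{Z}_2)^{n}$, hence isomorphic to $(\mathbb{Z}_2)^{3}$. Counting cosets forces the images $\bar x,\bar y,\bar z$ to be linearly independent, so they form a basis and the eight classes $\pm x^{a}y^{b}z^{c}$ with $a,b,c\in\{0,1\}$ exhaust $S$.

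Next I would reduce the full multiplication table to finitely many signs. Since $S$ is diassociative (Corollary~\ref{cor:diassoc}) and has the inverse property (Corollary~\ref{cor:ip}), any product of generators can be sorted into the normal form above; each re-bracketing or transposition multiplies by a central sign drawn from the squares $x^{2},y^{2},z^{2}$, the commutators $[x,y],[x,z],[y,z]$, and the associators $[x,y,z]$ (and associators of products), all lying in $\{1,-1\}$ by Proposition~\ref{prop:prop} and Lemma~\ref{lemma:comm-assoc}. Thus the multiplication of $S$ is determined by this sign data. I would then observe that almost all of it is forced and is shared by both candidate loops: every non-central element squares to $-1$ (Proposition~\ref{prop:prop}), and since each pair among $x,y,z$ is independent it generates $\mathbb{H}_{8}$, so $[x,y]=[x,z]=[y,z]=-1$ (Theorem~\ref{thm:culbert}, Lemma~\ref{lemma:comm-assoc}); the same squares and commutators hold in $\mathbb{O}_{16}$ and in $\tilde{\mathbb{O}}_{16}$. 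Hence the associator pattern is the only remaining invariant distinguishing the isomorphism classes.

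To see that this pattern yields exactly two classes, note first that $A(S)=Z(S)=\{1,-1\}\neq 1$ by Lemma~\ref{lemma:center}, so $S$ is non-associative, and that diassociativity makes the induced associator function on $(\mathbb{Z}_2)^{3}$ alternating (any associator involving a repeated generator lies in a two-generated, hence associative, subloop). The conceptual dichotomy is then whether $S$ is Moufang. In the Moufang case the associator is a nonzero alternating trilinear map into the center; since $\dim\Lambda^{3}(\mathbb{Z}_2)^{3}=1$ there is a unique such map, the multiplication is completely forced, and the assignment $x\mapsto i_{1},\,y\mapsto i_{2},\,z\mapsto i_{3}$ gives $S\cong\mathbb{O}_{16}$. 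The non-Moufang case should collapse to the single remaining type $\tilde{\mathbb{O}}_{16}$.

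To make the non-Moufang case effective I would normalize the generators: writing each generator as $\pm e_{A}$ with $e_{A}=\prod_{j\in A}i_{j}$, and using that $-1=x^{2}\in S$, one has $\left\langle x,y,z\right\rangle=\left\langle e_{A},e_{B},e_{C}\right\rangle$, so signs may be dropped and only the index sets $A,B,C$ matter. Relabeling the canonical generators and recording only the intersection pattern of $A,B,C$ and the parities controlling the Cayley--Dickson signs confines the relevant configurations to a fixed small Cayley--Dickson loop, leaving finitely many representative triples; for each I would construct the generated subloop and test it against $\mathbb{O}_{16}$ or $\tilde{\mathbb{O}}_{16}$ with the LOOPS package for GAP~\cite{NagyVojt:06}, as indicated before the statement. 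The main obstacle I anticipate is the completeness of this reduction: showing that the associator pattern is genuinely determined by finitely many combinatorial invariants of $A,B,C$, so that no configuration escapes the finite check and, in particular, that every non-Moufang configuration is isomorphic to $\tilde{\mathbb{O}}_{16}$ rather than producing a third class.
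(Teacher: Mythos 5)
Your setup (center $\{1,-1\}$, $S/\{1,-1\}\cong(\mathbb{Z}_2)^3$, all non-central squares equal to $-1$, all three commutators equal to $-1$, hence the associator data is the only remaining invariant) matches the paper's, but the proof has a genuine gap exactly where you flag it: you never establish that the associator data reduces to finitely many signs that \emph{determine} the multiplication table, and you leave open that the non-Moufang case could split into more than one isomorphism class. The missing idea in the paper's argument is that the three specific associators $[x,y,z]$, $[x,z,y]$, $[x,y,xz]$ already determine every entry of the multiplication table of $\pm\{1,x,y,xy,z,xz,yz,(xy)z\}$: one computes each product of two coset representatives using only diassociativity, the inverse property and $[u,v]=-1$ for independent pairs, and every sign that appears is a word in those three associators (Table~\ref{tab:MT1}). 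This leaves exactly $2^3=8$ candidate tables, and the proof finishes by exhibiting, for each of the $8$ sign patterns, an explicit triple of generators mapping onto $\{i_1,i_2,i_3\}$ or $\{i_1,i_2,i_3i_4\}$, giving $\mathbb{O}_{16}$ in the single case $[x,y,z]=[x,z,y]=[x,y,xz]=-1$ and $\tilde{\mathbb{O}}_{16}$ in the other seven. Without this step your "finitely many combinatorial invariants" claim is unsubstantiated.

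Two further points. Your proposed normalization by index sets $A,B,C$ and their "intersection pattern" would not work as stated: permuting the canonical generators $i_1,\ldots,i_n$ is \emph{not} in general an automorphism of $Q_n$ (indeed Theorem~\ref{thm:autom} shows $Aut(Q_n)$ is very small for $n\geq 4$), and the Cayley--Dickson signs depend on the actual positions of the indices, not merely on how $A,B,C$ intersect; so the claimed confinement to a fixed small loop does not follow. Separately, in your Moufang branch the assertion that the associator induces an alternating \emph{trilinear} map on $S/Z(S)$ is itself a nontrivial fact requiring proof (it holds for code loops, but you cannot simply assume it from diassociativity); the paper sidesteps this entirely by the explicit table computation.
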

\begin{proof}
Let $x,y,z \in Q_{n}$ such that $\left|\left\langle x,y,z\right\rangle\right|=16$. We want to construct a loop 
\[
\left\langle x,y,z\right\rangle=\pm\{1,x,y,xy,z,xz,yz,(xy)z\}.
\]
Fix the associators $[x,y,z]$, $[x,z,y]$, and $[x,y,xz]$. Using diassociativity and Lemma~\ref{lemma:comm-assoc}.1,
\begin{eqnarray}
\label{eq:quazioct0} x((xy)z) & = & [x,y,z]x(x(yz))=[x,y,z](xx)(yz)=-[x,y,z]yz,\\
y(xz) & = & -(xz)y=-[x,z,y]x(zy)=[x,z,y]x(yz)=[x,y,z][x,z,y](xy)z,\\
\nonumber y((xy)z) & = & -((xy)z)y= -[x,y,z](x(yz))y= [x,y,z](x(zy))y\\
\nonumber & = &[x,y,z][x,z,y]((xz)y)y= [x,y,z][x,z,y](xz)(yy)\\
& = & -[x,y,z][x,z,y](xz),\\
\nonumber(xz)((xy)z) & = & -((xy)z)(xz)=-[x,y,z](x(yz))(xz)=[x,y,z](x(zy))(xz)\\
\nonumber & = & [x,y,z][x,z,y]((xz)y)(xz)=-[x,y,z][x,z,y](y(xz))(xz)\\
& = & -[x,y,z][x,z,y]y((xz)(xz))=[x,y,z][x,z,y]y,\\
\nonumber (yz)((xy)z) & = & [x,y,z](yz)(x(yz))=-[x,y,z](x(yz))(yz)\\
& = & -[x,y,z]x((yz)(yz))=[x,y,z]x,\\
\nonumber (xy)(xz)&=&[x,y,xz]x(y(xz))=-[x,y,xz]x((xz)y)=-[x,z,y][x,y,xz]x(x(zy))\\
\nonumber & = & -[x,z,y][x,y,xz](xx)(zy)=[x,z,y][x,y,xz](zy)\\
\label{eq:quazioct1} & = & -[x,z,y][x,y,xz](yz).
\end{eqnarray}
Multiplying \eqref{eq:quazioct1} by $(xy)$ on the left, 
\begin{equation}
(xy)(yz)=[x,z,y][x,y,xz]xz.
\end{equation}
Multiplying \eqref{eq:quazioct1} by $(xz)$ on the right, 
\begin{equation}
\label{eq:quazioct2} (yz)(xz)=[x,z,y][x,y,xz]xy.
\end{equation}
Equalities \eqref{eq:quazioct0}-\eqref{eq:quazioct2} together with some trivial calculations result in Table \ref{tab:MT1}, i.e., it is sufficient to fix $[x,y,z]$, $[x,z,y]$ and $[x,y,xz]$ in order to uniquely define $\left\langle x,y,z\right\rangle$. 
\begin{table}[h]\scriptsize
	\centering
		\begin{tabular}{c||c|c|c|c|c|c|c}			
			1    &  x           & y                   &  xy               & z   &  xz                &  yz                & (xy)z\\ \hline \hline
			x    & -1           & xy                  &  -y               & xz  &  -z                &[x,y,z](xy)z       &-[x,y,z]yz  \\ \hline
			y    & -xy          & -1                  &   x               &  yz &[x,y,z][x,z,y](xy)z &     -z             &-[x,y,z][x,z,y]xz \\ \hline
			xy   &  y           &  -x                 &  -1               &(xy)z&-[x,z,y][x,y,xz]yz&[x,z,y][x,y,xz]xz & -z \\ \hline
			z    &   -xz        &  -yz                &-(xy)z             & -1  &  x                 &         y          &  xy\\ \hline
			xz   &   z          &-[x,y,z][x,z,y](xy)z&[x,z,y][x,y,xz]yz&  -x & -1                 &-[x,z,y][x,y,xz]xy&[x,y,z][x,z,y]y\\ \hline
			yz   &-[x,y,z](xy)z&  z                   &-[x,z,y][x,y,xz]xz&  -y &[x,z,y][x,y,xz]xy& -1                  &[x,y,z]x\\ \hline
		  (xy)z&[x,y,z]yz     &[x,y,z][x,z,y]xz    &   z               & -xy &-[x,y,z][x,z,y]y   &-[x,y,z]x           & -1
		\end{tabular}
	\caption{Multiplication table of $\left\langle x,y,z\right\rangle$}
	\label{tab:MT1}
\end{table}
We need to consider the following cases:\\
If $[x,y,z]=[x,z,y]=[x,y,xz]=-1$, then $\left\langle x,y,z\right\rangle \cong \mathbb{O}_{16}$ by $\{x,y,z\}\mapsto\{i_{1},i_{2},i_{3}\}$.\\
If $[x,y,z]=[x,z,y]=-1$, $[x,y,xz]=1$, then $\left\langle x,y,z\right\rangle\cong \tilde{\mathbb{O}}_{16}$ by $\{xz, yz, z\}\mapsto\{i_{1},i_{2},i_{3}i_{4}\}$.\\
If $[x,y,z]=[x,y,xz]=-1$, $[x,z,y]=1$, then $\left\langle x,y,z\right\rangle\cong \tilde{\mathbb{O}}_{16}$ by $\{x,z,y\}\mapsto\{i_{1},i_{2},i_{3}i_{4}\}$.\\
If $[x,y,z]=-1$, $[x,z,y]=[x,y,xz]=1$, then $\left\langle x,y,z\right\rangle\cong \tilde{\mathbb{O}}_{16}$ by $\{y,-xz,x\}\mapsto\{i_{1},i_{2},i_{3}i_{4}\}$.\\
If $[x,y,z]=1$, $[x,z,y]=[x,y,xz]=-1$, then $\left\langle x,y,z\right\rangle\cong \tilde{\mathbb{O}}_{16}$ by $\{-xy, z, x\}\mapsto\{i_{1},i_{2},i_{3}i_{4}\}$.\\
If $[x,y,z]=[x,y,xz]=1$, $[x,z,y]=-1$, then $\left\langle x,y,z\right\rangle\cong \tilde{\mathbb{O}}_{16}$ by $\{x,y,z\}\mapsto\{i_{1},i_{2},i_{3}i_{4}\}$.\\
If $[x,y,z]=[x,z,y]=1$, $[x,y,xz]=-1$, then $\left\langle x,y,z\right\rangle\cong \tilde{\mathbb{O}}_{16}$ by $\{y,z,x\}\mapsto\{i_{1},i_{2},i_{3}i_{4}\}$.\\
If $[x,y,z]=[x,z,y]=[x,y,xz]=1$, then $\left\langle x,y,z\right\rangle\cong \tilde{\mathbb{O}}_{16}$ by $\{x,-yz,y\}\mapsto\{i_{1},i_{2},i_{3}i_{4}\}$.\qedhere
\end{proof}
Next, we study the associators in $Q_n$. We use the result to prove Lemmas \ref{lemma:aut2} and \ref{lemma:aut1}. 
\begin{lemma} \label{rmk:assoc} Let $x,y,z\in Q_{n-1},$ then in $Q_n$
\begin{enumerate}[(a)]
	\item $[(x,0),(y,0),(z,1)]=[x,y][z,y,x],$	
	\item $[(x,0),(y,1),(z,0)]=[x,z][y,x,z][y,z,x],$	
	\item $[(x,0),(y,1),(z,1)]=[x,y][x,z][z,x,y][x,z,y],$
  \item $[(x,1),(y,0),(z,0)]=[y,z][x,y,z],$
	\item $[(x,1),(y,0),(z,1)]=[y,x][y,z][z,y,x],$
	\item $[(x,1),(y,1),(z,0)]=[z,x][z,y][y,x,z][y,z,x],$
	\item $[(x,1),(y,1),(z,1)]=[x,y][x,z][y,z][z,x,y][x,z,y].$
\end{enumerate}\end{lemma}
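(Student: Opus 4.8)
The plan is to prove the seven identities uniformly, as instances of a single computation indexed by the bits $(\epsilon_1,\epsilon_2,\epsilon_3)\in\{0,1\}^3$ attached to $x,y,z$. For a fixed choice of bits I would expand the two triple products $((x,\epsilon_1)(y,\epsilon_2))(z,\epsilon_3)$ and $(x,\epsilon_1)((y,\epsilon_2)(z,\epsilon_3))$ using the four multiplication rules \eqref{eqn:mult0}, \eqref{eqn:mult1}, \eqref{eqn:mult2} and $(x,1)(y,1)=(-y^{*}x,0)$, together with the conjugation rules \eqref{eqn:conj1}. In each case both products carry the same last coordinate $\epsilon_1+\epsilon_2+\epsilon_3 \bmod 2$, so they read $(P,\delta)$ and $(P',\delta)$ with $P,P'\in Q_{n-1}$ words in $x,y,z$ (and their conjugates) in which each letter occurs exactly once. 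By Lemma~\ref{lemma:comm-assoc} the associator $[(x,\epsilon_1),(y,\epsilon_2),(z,\epsilon_3)]$ is a central element of $\{1,-1\}=\{(1,0),(-1,0)\}$, and multiplying $(P',\delta)$ by $(\pm 1,0)$ only rescales the first coordinate; hence the associator is exactly the sign $s\in\{1,-1\}$ for which $P=sP'$ in $Q_{n-1}$, and the whole problem reduces to computing this sign.

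I would compute $s$ in two stages. First I would clear the conjugates: by Proposition~\ref{prop:prop} one has $w^{*}=-w$ for $w\neq\pm1$, and conjugation is an anti-automorphism, $(ab)^{*}=b^{*}a^{*}$ (in $Q_n$ this follows from the inverse property, Corollary~\ref{cor:ip}, together with $w^{*}=w^{-1}$). A case-by-case expansion shows that in each of the seven cases the same letters appear conjugated in $P$ and in $P'$, so after substituting $w^{*}=-w$ the conjugation signs cancel between the two sides and each of $P,P'$ becomes an honest product of $x,y,z$. Second, I would reduce both products to one canonical word, say $x(yz)$, by a bubble sort: each transposition of two adjacent factors rewrites $ab$ as $(ba)[a,b]$, and each re-bracketing rewrites $(ab)c$ as $(a(bc))[a,b,c]$, where by Lemma~\ref{lemma:comm-assoc} every commutator and associator thus produced lies in the central subgroup $\{1,-1\}$. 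Collecting the signs from the two reductions and simplifying with $[a,b]=[b,a]$ and $t^2=1$ for $t\in\{1,-1\}$ expresses $s$ as a product of commutators and associators of $Q_{n-1}$, which I would then match against the stated right-hand side.

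The genuinely laborious, error-prone part is this sign bookkeeping, and it is heaviest in cases (c), (f) and (g), where two of the three letters carry the bit $1$: there $P$ and $P'$ contain several conjugates and the bubble sort must carry one letter past both others, so four or five elementary factors accumulate and must be combined without slips. I expect this to be the main obstacle, and I would control it by always reducing to the same canonical word and by using the more symmetric cases (a) and (d) as sanity checks. One point I would address explicitly is the degenerate situation in which a letter equals $\pm1$, where the substitution $w^{*}=-w$ is invalid: in that case every commutator and associator involving that letter is trivially $1$ and both sides of the identity collapse to the same value, so the formula continues to hold.
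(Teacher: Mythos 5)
Your proposal is correct and follows essentially the same route as the paper's proof: expand both bracketings with the doubling multiplication rules, observe that the last coordinates agree and that by Lemma~\ref{lemma:comm-assoc} every commutator and associator is a central sign, and then extract that sign by elementary reorderings and rebracketings of the first coordinates (the paper eliminates conjugates via $(ab)^{*}=b^{*}a^{*}$ and $[a,b^{*}]=[a,b]$ in-line rather than substituting $w^{*}=-w$ up front, but this is only a cosmetic difference, and your check that the same letters appear conjugated on both sides does hold in all seven cases). The only thing separating your write-up from a complete proof is that the sign bookkeeping you correctly identify as the crux is described but not actually executed, whereas the paper carries out all seven computations explicitly.
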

\begin{proof}
\begin{enumerate}[(a)]
	\item $(x,0)(y,0)\cdot(z,1)=(xy,0)(z,1)=(z\cdot xy,1)=[x,y](z\cdot yx,1)\\
	=[x,y][z,y,x](zy\cdot x,1)=[x,y][z,y,x]((x,0)(zy,1))=[x,y][z,y,x]((x,0)\cdot(y,0)(z,1)).$	
	\item $(x,0)(y,1)\cdot(z,0)=(yx,1)(z,0)=(yx\cdot z^{*},1)=[y,x,z](y\cdot xz^{*},1)=[x,z][y,x,z](y\cdot z^{*}x,1)\\=[x,z][y,x,z][y,z,x](yz^{*}\cdot x,1)=[x,z][y,x,z][y,z,x]((x,0)(yz^{*},1))\\
	=[x,z][y,x,z][y,z,x]((x,0)\cdot (y,1)(z,0)).$
	\item $(x,0)(y,1)\cdot (z,1)=(yx,1)(z,1)=(-z^{*}\cdot yx,0)=[x,y](-z^{*}\cdot xy,0)\\
	=[x,y][z,x,y](-z^{*}x\cdot y,0)=[x,y][x,z][z,x,y](x(-z^{*})\cdot y,0)\\
	=[x,y][x,z][z,x,y][x,z,y](x\cdot(-z^{*})y,0)=[x,y][x,z][z,x,y][x,z,y]((x,0)\cdot(-z^{*}y,0))\\
	=[x,y][x,z][z,x,y][x,z,y]((x,0)\cdot(y,1)(z,1)).$
	\item $(x,1)(y,0)\cdot (z,0)=(xy^{*},1)(z,0)=(xy^{*}\cdot z^{*},1)=[x,y,z](x\cdot y^{*}z^{*},1)\\
	=[x,y,z]((x,1)((y^{*}z^{*})^{*},0))=[x,y,z]((x,1)(zy,0))=[y,z][x,y,z]((x,1)(yz,0))\\
	=[y,z][x,y,z]((x,1)\cdot(y,0)(z,0)).$
	\item $(x,1)(y,0)\cdot (z,1)=(xy^{*},1)(z,1)=(-z^{*}\cdot xy^{*},0)=[y,x](-z^{*}\cdot y^{*}x,0)\\
	=[y,x][z,y,x](-z^{*}y^{*}\cdot x,0)=[y,x][z,y,x]((x,1)(-(-z^{*}y^{*})^{*},1))\\
	=[y,x][z,y,x]((x,1)(yz,1))=[y,x][y,z][z,y,x]((x,1)(zy,1))\\
	=[y,x][y,z][z,y,x]((x,1)\cdot(y,0)(z,1)).$
	\item $(x,1)(y,1)\cdot (z,0)=(-y^{*}x,0)(z,0)=(-y^{*}x\cdot z,0)=[y,x,z](-y^{*}\cdot xz,0)\\
	=[z,x][y,x,z](-y^{*}\cdot zx,0)=[z,x][y,x,z][y,z,x](-y^{*}z\cdot x,0)\\
=[z,x][y,x,z][y,z,x]((x,1)(-(-y^{*}z)^{*},1))=[z,x][y,x,z][y,z,x]((x,1)(z^{*}y,1))\\
=[z,x][z,y][y,x,z][y,z,x]((x,1)(yz^{*},1))=[z,x][z,y][y,x,z][y,z,x]((x,1)\cdot (y,1)(z,0)).$
	\item $(x,1)(y,1)\cdot(z,1)=(-y^{*}x,0)(z,1)=(z\cdot (-y^{*})x,1)=[x,y](z\cdot x(-y^{*}),1)\\
	=[x,y][z,x,y](zx\cdot(-y^{*}),1)=[x,y][x,z][z,x,y](xz\cdot(-y^{*}),1)\\
	=[x,y][x,z][z,x,y][x,z,y](x\cdot z(-y^{*}),1)=[x,y][x,z][z,x,y][x,z,y]((x,1)((z(-y^{*}))^{*},0))\\
	=[x,y][x,z][z,x,y][x,z,y]((x,1)(-yz^{*},0))=[x,y][x,z][y,z][z,x,y][x,z,y]((x,1)(-z^{*}y,0))\\
	=[x,y][x,z][y,z][z,x,y][x,z,y]((x,1)\cdot (y,1)(z,1)).$
	\qedhere	
\end{enumerate}
\end{proof}
Lemma \ref{lemma:aut2} shows that $e\in Q_{n}$ is special; if we consider a subloop $\left\langle x,y,e\right\rangle$ of $Q_{n}$ such that $\left|\left\langle x,y,e\right\rangle\right|=16$, then $\left\langle x,y,e\right\rangle$ is always a copy of the octonion loop $\mathbb{O}_{16}$. Lemma \ref{lemma:aut7} shows that this, however, is not the case for any element of $Q_{n}\backslash\{\pm e\}$. Therefore, an automorphism on $Q_{n}$ cannot map $e$ to an element $x\in Q_{n}\backslash\{\pm e\}$. Also, we use Lemma \ref{lemma:aut3} to show that an element $(x,0)$ of $Q_{n}$ is contained in more copies of $Q_{n-1}$ than an element $(y,1)$, and hence an automorphism on $Q_{n}$ cannot map $(x,0)$ to $(y,1)$ for any $x,y\in Q_{n-1}$.
\begin{lemma}\label{lemma:aut2}
$\left\langle x,y,e\right\rangle\cong\mathbb{O}_{16}$ for any $x,y\in  Q_{n}$ such that $e\notin \left\langle x,y\right\rangle\cong \mathbb{H}_{8}$.
\end{lemma}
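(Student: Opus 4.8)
The plan is to reduce the claim to the evaluation of three associators and then invoke Lemma~\ref{lemma:quasioct}. Write $x=(a,\alpha)$ and $y=(b,\beta)$ with $a,b\in Q_{n-1}$ and $\alpha,\beta\in\{0,1\}$, and recall $e=(1_{Q_{n-1}},1)$. Since $\langle x,y\rangle\cong\mathbb{H}_8$ has order $8$ and $e\notin\langle x,y\rangle$, part~\ref{lemma:order1} of Lemma~\ref{lemma:order} gives $|\langle x,y,e\rangle|=16$, so Lemma~\ref{lemma:quasioct} applies. Inspecting its proof, $\langle x,y,e\rangle\cong\mathbb{O}_{16}$ precisely when the three associators $[x,y,e]$, $[x,e,y]$ and $[x,y,xe]$ are all equal to $-1$ (any other sign pattern yields $\tilde{\mathbb{O}}_{16}$). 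Thus it suffices to show that each of these three associators equals $-1$.

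First I would show that the hypotheses force $\langle a,b\rangle\cong\mathbb{H}_8$ in $Q_{n-1}$, so that $[a,b]=-1$ by Lemma~\ref{lemma:comm-assoc}.1. If $a=\pm1$ then $x\in\{\pm1,\pm e\}$, contradicting either $\langle x,y\rangle\cong\mathbb{H}_8$ (when $x=\pm1$) or $e\notin\langle x,y\rangle$ (when $x=\pm e$); likewise $b\neq\pm1$. If $a=\pm b$, then a short computation of $x^{*}y$ or $y^{*}x$ from \eqref{eqn:mult0}--\eqref{eqn:conj1} shows that either $x=\pm y$ or $\pm e\in\langle x,y\rangle$, again a contradiction. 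Hence $a\neq\pm1$, $b\neq\pm1$ and $a\neq\pm b$, so $\langle a,b\rangle\cong\mathbb{H}_8$ by Theorem~\ref{thm:culbert} and $[a,b]=-1$.

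Next I would evaluate the three associators with Lemma~\ref{rmk:assoc}. For $[x,y,e]$ and $[x,e,y]$, the entry of Lemma~\ref{rmk:assoc} selected according to $(\alpha,\beta)$ expresses the associator as a product of commutators and associators of $a$, $b$ and the constant $1_{Q_{n-1}}$; every factor involving $1_{Q_{n-1}}$ is trivial (as $1$ is central and associates with everything), so both associators collapse to $[a,b]=-1$ in all four cases of $(\alpha,\beta)$. For $[x,y,xe]$ I would first compute $xe$ from \eqref{eqn:mult0}--\eqref{eqn:conj1}: $xe=(a,1)$ when $\alpha=0$ and $xe=(-a,0)$ when $\alpha=1$. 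Feeding this into the appropriate entry of Lemma~\ref{rmk:assoc} and using that $\langle a,b\rangle$ is a group, so the associators among $a,b$ vanish (Lemma~\ref{lemma:comm-assoc}.2), together with the fact that $-1$ is central, so the sign in $-a$ is immaterial (Proposition~\ref{prop:prop}), the associator once more reduces to $[a,b]=-1$. With all three associators equal to $-1$, Lemma~\ref{lemma:quasioct} yields $\langle x,y,e\rangle\cong\mathbb{O}_{16}$.

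The main obstacle is bookkeeping rather than conceptual: one must dispatch the degenerate subcases $a=\pm b$ in the reduction step, and confirm that the collapse to $[a,b]$ is genuinely uniform across all four sign patterns $(\alpha,\beta)$ and both forms of $xe$. The governing principle is that the first coordinate carries the quaternionic commutator $[a,b]$ while the constant second-coordinate data of $e$ trivializes every cross term, which is exactly what forces the octonionic, rather than quasioctonionic, associator pattern.
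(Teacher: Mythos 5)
Your proposal is correct and follows essentially the same route as the paper: reduce to showing $[x,y,e]=[x,e,y]=[x,y,xe]=-1$, then evaluate these via Lemma~\ref{rmk:assoc} in the four cases $(\alpha,\beta)\in\{0,1\}^2$, with everything collapsing to the commutator of the first coordinates. Your explicit verification that the hypotheses force $\langle a,b\rangle\cong\mathbb{H}_{8}$ (hence $[a,b]=-1$) and that $|\langle x,y,e\rangle|=16$ is a welcome addition, as the paper leaves both points implicit.
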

\begin{proof}
Let $x,y$ be elements of $Q_{n}$ such that $e\notin \left\langle x,y\right\rangle\cong \mathbb{H}_{8}$. As follows from the proof of Lemma~\ref{lemma:quasioct}, in order to prove that $\left\langle x,y,e\right\rangle\cong\mathbb{O}_{16}$, it is sufficient to show that  
\begin{equation}
[x,y,e]=[x,e,y]=[x,y,xe]=-1.
\end{equation}
Let $\overline{x},\overline{y}$ be elements of $Q_{n-1}$. We use Lemma~\ref{rmk:assoc}, and consider the following cases:\\
If $x=(\overline{x},0),y=(\overline{y},0)$, then $xe=(\overline{x},0)(1,1)=(\overline{x},1)$, and
	\begin{eqnarray}
\nonumber [x,y,e] & = & [(\overline{x},0),(\overline{y},0),(1,1)]=[\overline{x},\overline{y}][1,\overline{y},\overline{x}]=-1,\\ 	     
\nonumber [x,e,y] & = & [(\overline{x},0),(1,1),(\overline{y},0)]=[\overline{x},\overline{y}][1,\overline{x},\overline{y}][1,\overline{y},\overline{x}]=-1,\\
\nonumber [x,y,xe] & = & [(\overline{x},0),(\overline{y},0),(\overline{x},1)]=[\overline{x},\overline{y}][\overline{x},\overline{y},\overline{x}]=-1.
	\end{eqnarray}
If $x=(\overline{x},0),y=(\overline{y},1)$, then $xe=(\overline{x},0)(1,1)=(\overline{x},1)$, and 
	\begin{eqnarray}
\nonumber [x,y,e] & = & [(\overline{x},0),(\overline{y},1),(1,1)]=[\overline{x},\overline{y}][\overline{x},1][1,\overline{x},\overline{y}][\overline{x},1,\overline{y}]=-1,\\
\nonumber [x,e,y] & = & [(\overline{x},0),(1,1),(\overline{y},1)]=[\overline{x},1][\overline{x},\overline{y}][\overline{y},\overline{x},1][\overline{x},\overline{y},1]=-1,\\ \nonumber [x,y,xe] & = & [(\overline{x},0),(\overline{y},1),(\overline{x},1)]= [\overline{x},\overline{y}][\overline{x},\overline{x}][\overline{x},\overline{x},\overline{y}][\overline{x},\overline{x},\overline{y}]=-1.
	\end{eqnarray}
If $x=(\overline{x},1),y=(\overline{y},0)$, then $xe=(\overline{x},1)(1,1)=(-\overline{x},0)$, and
	\begin{eqnarray}
\nonumber [x,y,e] & = & [(\overline{x},1),(\overline{y},0),(1,1)]=[\overline{y},\overline{x}][\overline{y},1][1,\overline{y},\overline{x}]=-1,\\ 			\nonumber [x,e,y] & = & [(\overline{x},1),(1,1),(\overline{y},0)]=[\overline{y},\overline{x}][\overline{y},1][1,\overline{x},\overline{y}][1,\overline{y},\overline{x}]=-1,\\ \nonumber [x,y,xe] & = & [(\overline{x},1),(\overline{y},0),(-\overline{x},0)]=[\overline{y},-\overline{x}][\overline{x},\overline{y},-\overline{x}]=-1.
	\end{eqnarray}
If $x=(\overline{x},1),y=(\overline{y},1)$, then $xe=(\overline{x},1)(1,1)=(-\overline{x},0)$, and 
	\begin{eqnarray}
\nonumber [x,y,e] & = & [(\overline{x},1),(\overline{y},1),(1,1)]=[\overline{x},\overline{y}][\overline{x},1][\overline{y},1][1,\overline{x},\overline{y}][\overline{x},1,\overline{y}]=-1,\\
\nonumber [x,e,y] & = & [(\overline{x},1),(1,1),(\overline{y},1)]=[\overline{x},1][\overline{x},\overline{y}][1,\overline{y}][\overline{y},\overline{x},1][\overline{x},\overline{y},1]=-1,\\
\nonumber [x,y,xe] & = & [(\overline{x},1),(\overline{y},1),(-\overline{x},0)]=[-\overline{x},\overline{x}][-\overline{x},\overline{y}][\overline{y},\overline{x},-\overline{x}][\overline{y},-\overline{x},\overline{x}]=-1.
	\end{eqnarray}
We conclude that $[x,y,e]=[x,e,y]=[x,y,xe]=-1$ for any $x,y\in  Q_{n}$ such that $e\notin \left\langle x,y\right\rangle\cong \mathbb{H}_{8}$.  
By Lemma~\ref{lemma:quasioct}, $\left\langle x,y,e\right\rangle\cong \mathbb{O}_{16}$ by $\{x,y,e\}\mapsto\{i_{1},i_{2},i_{3}\}$. \qedhere
\end{proof}
The following lemma helps to distinguish between some copies of $\mathbb{O}_{16}$ and $\tilde{\mathbb{O}}_{16}$, and is used to prove Lemmas \ref{lemma:aut3} and \ref{lemma:aut7}.
\begin{lemma}\label{lemma:aut1}
Let $x,y,z\in  Q_{n-1},$ $n\geq 4$ be such that $\left\langle x,y,z\right\rangle\cong \mathbb{O}_{16}$. Then in $Q_{n}$ 
\begin{eqnarray}
\nonumber \left\langle (x,0),(y,0),(z,0)\right\rangle & \cong & \left\langle (x,1),(y,1),(z,1)\right\rangle\cong \mathbb{O}_{16},\\
\nonumber \left\langle (x,0),(y,0),(z,1)\right\rangle & \cong & \left\langle (x,0),(y,1),(z,1)\right\rangle\cong \tilde{\mathbb{O}}_{16}.
\end{eqnarray}
\end{lemma}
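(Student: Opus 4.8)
The plan is to reduce every case to the classification established in Lemma~\ref{lemma:quasioct}: for a triple $a,b,c$ generating a subloop of size $16$, it suffices to compute the three associators $[a,b,c]$, $[a,c,b]$, $[a,b,ac]$ and then read off the isomorphism type from the list in the proof of that lemma (all three equal to $-1$ yields $\mathbb{O}_{16}$, every other combination yields $\tilde{\mathbb{O}}_{16}$). Before doing so I would record the input data coming from $\langle x,y,z\rangle\cong\mathbb{O}_{16}$: since $x,y,z$ are pairwise independent (none is $\pm1$ or $\pm$ another, else the generated loop would have size at most $8$), Theorem~\ref{thm:culbert} gives $[x,y]=[x,z]=[y,z]=-1$, and Lemma~\ref{lemma:comm-assoc} assigns the value $-1$ to every associator of three of these elements taken in any order, since each reordered triple still generates $\mathbb{O}_{16}$. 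I would also check that each of the four liftings generates a subloop of size exactly $16$, so that Lemma~\ref{lemma:quasioct} applies: the images of the lifted generators are independent in $Q_n/\{1,-1\}\cong(\mathbb{Z}_2)^n$ (Remark~\ref{cor:Zn}), forcing the subloop to have size at least $16$, while Lemma~\ref{lemma:order} caps it at $16$.

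The two easy liftings come first. For $\langle(x,0),(y,0),(z,0)\rangle$ the multiplication rule~\eqref{eqn:mult0} shows that $(w,0)\mapsto w$ is an isomorphism onto $\langle x,y,z\rangle$, so this copy is $\mathbb{O}_{16}$ by Proposition~\ref{prop:prop}. For the two liftings headed by $(x,0)$ with at least one trailing coordinate equal to $1$, I would compute only the \emph{first} associator via Lemma~\ref{rmk:assoc}: part~(a) gives $[(x,0),(y,0),(z,1)]=[x,y][z,y,x]=(-1)(-1)=1$, and part~(c) gives $[(x,0),(y,1),(z,1)]=[x,y][x,z][z,x,y][x,z,y]=1$. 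Since the first associator is $+1$, the triple cannot be $(-1,-1,-1)$, so Lemma~\ref{lemma:quasioct} classifies both as $\tilde{\mathbb{O}}_{16}$ with no further computation.

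The remaining lifting $\langle(x,1),(y,1),(z,1)\rangle$ is where the real work lies. Here $[(x,1),(y,1),(z,1)]$ and $[(x,1),(z,1),(y,1)]$ follow directly from Lemma~\ref{rmk:assoc}(g) (the second by interchanging the roles of $y$ and $z$), each evaluating to a product of five factors of $-1$, hence to $-1$. The genuinely delicate quantity is the third associator $[(x,1),(y,1),(x,1)(z,1)]$: the product $(x,1)(z,1)$ equals $(-z^{*}x,0)=(zx,0)$, a coordinate-$0$ element, so it must be substituted into Lemma~\ref{rmk:assoc}(f) in place of the third generator. This introduces the compound element $zx$ rather than a canonical generator, producing terms such as $[zx,x]$, $[zx,y]$, $[y,x,zx]$, $[y,zx,x]$. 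I would evaluate these by noting that $zx\in\langle x,z\rangle\cong\mathbb{H}_8$ while $\langle y,x,zx\rangle=\langle x,y,z\rangle\cong\mathbb{O}_{16}$ (one recovers $z$ from $x$ and $zx$ using the inverse property, Corollary~\ref{cor:ip}), so each term is again pinned down by Theorem~\ref{thm:culbert} and Lemma~\ref{lemma:comm-assoc}.

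I expect the main obstacle to be exactly this mixed-parity associator. Because the lifted product $(x,1)(z,1)$ flips the last coordinate, one is forced out of the regime of the canonical generators and must track the signs of associators of \emph{products} of generators; the single sign emerging here is precisely what separates $\mathbb{O}_{16}$ from $\tilde{\mathbb{O}}_{16}$ for this lifting, so there is no slack in the argument. I would therefore determine $[(x,1),(y,1),(x,1)(z,1)]$ with particular care, cross-checking the Lemma~\ref{rmk:assoc}(f) bookkeeping against a direct computation inside $\mathbb{S}_{32}$ (equivalently, against the multiplication table built in the proof of Lemma~\ref{lemma:quasioct}) before committing to the isomorphism type of this final lifting.
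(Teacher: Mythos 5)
Your strategy---compute $[a,b,c]$, $[a,c,b]$, $[a,b,ac]$ for each lifted triple and read the isomorphism type off the case list of Lemma~\ref{lemma:quasioct}---is sound, and for three of the four subloops it succeeds essentially as the paper's own proof does (the paper certifies size $16$ for the mixed cases by exhibiting a second, nontrivial associator via Lemma~\ref{rmk:assoc} rather than by your linear-independence argument in $(\mathbb{Z}_2)^n$; both are fine). The problem is precisely the computation you defer. Carry it out: $(x,1)(z,1)=(-z^{*}x,0)=(zx,0)$, and Lemma~\ref{rmk:assoc}(f) gives $[(x,1),(y,1),(zx,0)]=[zx,x][zx,y][y,x,zx][y,zx,x]$. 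Since $\left\langle zx,x\right\rangle=\left\langle x,z\right\rangle\cong\mathbb{H}_{8}$, $\left\langle zx,y\right\rangle\cong\mathbb{H}_{8}$, and $\left\langle y,x,zx\right\rangle=\left\langle x,y,z\right\rangle\cong\mathbb{O}_{16}$, all four factors equal $-1$ and the product is $+1$. Combined with $[a,b,c]=[a,c,b]=-1$, this is the case ``$[x,y,z]=[x,z,y]=-1$, $[x,y,xz]=1$'' of Lemma~\ref{lemma:quasioct}, which yields $\tilde{\mathbb{O}}_{16}$. So your method, completed honestly, refutes the fourth isomorphism instead of proving it; the proposal as written cannot be closed into a proof of the statement.

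The fault does not appear to be yours: the paper's proof of that case is a bare assertion that $\{(x,1),(y,1),(z,1)\}\mapsto\{i_{1},i_{2},i_{3}\}$ is an isomorphism, and two independent checks side with your arithmetic. First, the direct computation in $\mathbb{S}_{32}$ that you propose as a cross-check: with $a=(x,1)$, $b=(y,1)$, $c=(z,1)$ one finds $(ab)(ac)=((yx)(zx),0)=(-yz,0)$ and $a(b\cdot ac)=(-(y(zx))x,0)=(-yz,0)$, so $[a,b,ac]=1$; since $a$, $b$, $ac$ generate the whole $16$-element subloop while $[a,b,c]=-1$ shows it is not a group, Theorem~\ref{thm:moufang} forbids it from being Moufang, hence it is $\tilde{\mathbb{O}}_{16}$. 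Second, for $n=4$ the subloop $\left\langle (x,1),(y,1),(z,1)\right\rangle$ has index $2$, does not contain $e$, and is a subloop of the third type in the sense of Lemma~\ref{lemma:aut4}, so Lemma~\ref{lemma:aut3}.3 itself says it is not isomorphic to $Q_{3}=\mathbb{O}_{16}$; the lemma as stated is internally inconsistent with that result. Only the two $\tilde{\mathbb{O}}_{16}$ conclusions (via \eqref{eqn:oct1}--\eqref{eqn:oct4}) are used later in the paper, so nothing downstream depends on the faulty claim, but your write-up should either correct the fourth isomorphism to $\tilde{\mathbb{O}}_{16}$ or omit it rather than attempt to prove it.
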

\begin{proof}
Let $x,y,z\in  Q_{n-1}$ be such that $\left\langle x,y,z\right\rangle\cong \mathbb{O}_{16}$. By Lemma \ref{lemma:comm-assoc}, $[x,y,z]=[x,z,y]=[y,x,z]=-1$, and $[x,y]=[y,z]=[x,z]=-1$. Using Lemma~\ref{rmk:assoc}, 
\begin{equation} \label{eqn:oct1}
[(x,0),(z,1),(y,0)]=[x,y][z,x,y][z,y,x]=-1
\end{equation}
shows that $\left\langle (x,0),(y,0),(z,1)\right\rangle >  \mathbb{H}_{8}$ and hence $\left|\left\langle (x,0),(y,0),(z,1)\right\rangle\right|=16$, while 
\begin{equation} \label{eqn:oct2}
[(x,0),(y,0),(z,1)]=[x,y][z,y,x]=1
\end{equation}
shows that $\left\langle (x,0),(y,0),(z,1)\right\rangle$ is not Moufang and therefore $\left\langle (x,0),(y,0),(z,1)\right\rangle\cong  \tilde{\mathbb{O}}_{16}$. Similarly, using Lemma~\ref{rmk:assoc}, 
\begin{eqnarray}
\label{eqn:oct3} [(y,1),(x,0),(z,1)] & = & [x,y][x,z][z,x,y]=-1,  \\
\label{eqn:oct4} [(x,0),(y,1),(z,1)] & = & [x,y][x,z][z,x,y][x,z,y]=1 
\end{eqnarray}
shows that $\left\langle (x,0),(y,1),(z,1)\right\rangle\cong \tilde{\mathbb{O}}_{16}$.\\
A loop $\left\langle (x,0),(y,0),(z,0)\right\rangle\cong\mathbb{O}_{16}$ as a copy of $\left\langle x,y,z\right\rangle$ in $Q_{n}$. \\
A loop $\left\langle (x,1),(y,1),(z,1)\right\rangle\cong\mathbb{O}_{16}$ by $\{(x,1),(y,1),(z,1)\} \mapsto \{i_{1},i_{2},i_{3}\}$.
\qedhere
\end{proof}
\begin{definition}
Let $B$ be a subloop of $Q_{n}$ of index 2 and $D$ be a subloop of $Q_{n-1}$ of index 2. We call $B$ a subloop of the first type when $B= Q_{n-1}$, a subloop of the second type when $B=D\oplus De$, a subloop of the third type when $B=D\oplus\left(Q_{n-1}\backslash D\right)e$.
\end{definition}
Figure \ref{fig:SubInd2inSed} illustrates all subloops of index 2 of the sedenion loop $\mathbb{S}_{32}$. Rows in the figure correspond to the subloops, columns show the elements these subloops contain. One may notice that each of the subloops is of one of three types. The following lemma shows that this is the case for all Cayley-Dickson loops. 
\begin{lemma}\label{lemma:aut4}
If $B$ is a subloop of $Q_{n}$ of index 2, then $B$ is a subloop of either the first, or the second, or the third type.
\end{lemma}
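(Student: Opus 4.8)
The plan is to reduce the statement to linear algebra over $\mathbb{Z}_2$ through the quotient $Q_{n}/\{1,-1\}\cong(\mathbb{Z}_2)^n$ of Remark~\ref{cor:Zn}. First I would verify $-1\in B$: since $B$ has index $2$, $\left|B\right|=2^{n}$; if $\left|B\right|>2$ then $B$ contains some $x\neq\pm1$, and by Proposition~\ref{prop:prop} $x^{2}=-1$, so $-1\in B$, while if $\left|B\right|=2$ then $B$ must be $\{1,-1\}$ (the only subloop of order $2$, as every other element has order $4$). In either case $Z(Q_{n})=\{1,-1\}\subseteq B$ by Lemma~\ref{lemma:center}, so $B=\pi^{-1}(\overline{B})$ for the canonical projection $\pi\colon Q_{n}\to(\mathbb{Z}_2)^n$, where $\overline{B}:=B/\{1,-1\}$ is an index-$2$ subgroup, i.e.\ a hyperplane $\ker\lambda$ for a unique nonzero linear functional $\lambda$ on $(\mathbb{Z}_2)^n$.

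The central observation is that the last-coordinate map $\ell\colon Q_{n}\to\mathbb{Z}_2$, $(x,x_{n+1})\mapsto x_{n+1}$, is a surjective loop homomorphism: the multiplication rules for $Q_{n}$ send a product of elements with last coordinates $\delta_{1},\delta_{2}$ to one with last coordinate $\delta_{1}+\delta_{2}\pmod 2$, and negation preserves the last coordinate. Hence $\ell$ kills $\{1,-1\}$, descends to $(\mathbb{Z}_2)^n$, and is precisely the coordinate functional dual to $i_{n}=e$ (the generators $i_{1},\dots,i_{n-1}$ lie in the embedded $Q_{n-1}$ and so have last coordinate $0$, while $i_{n}=e$ has last coordinate $1$); its kernel is the embedded copy of $Q_{n-1}$. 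I would then compare $\lambda$ with $\ell$.

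I would split into three mutually exclusive and exhaustive cases according to the coefficient of $i_{n}$ in $\lambda$. If $\lambda=\ell$, then $B=\ker\ell=Q_{n-1}$, the first type. If the coefficient of $i_{n}$ in $\lambda$ is $0$, then $\lambda$ restricts to a nonzero functional on $Q_{n-1}/\{1,-1\}$, cutting out an index-$2$ subloop $D\leq Q_{n-1}$; here membership in $B$ is independent of $x_{n+1}$, so $(x,0)\in B\iff(x,1)\in B\iff x\in D$, giving $B=D\oplus De$, the second type (using $(x,0)e=(x,1)$). Otherwise the coefficient of $i_{n}$ is $1$ and $\lambda\neq\ell$, so $\lambda=\mu+\ell$ with $\mu\neq0$ supported on the first $n-1$ coordinates; $\mu$ cuts out an index-$2$ subloop $D\leq Q_{n-1}$, and $x\in B\iff x_{n+1}=\mu(\pi(x))$, whence $(x,0)\in B\iff x\in D$ while $(x,1)\in B\iff x\notin D$, giving $B=D\oplus(Q_{n-1}\backslash D)e$, the third type.

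The substantive inputs are only that $-1\in B$ (so that we may pass to the quotient) and that $\ell$ is the coordinate functional dual to $e=i_{n}$; both follow at once from Proposition~\ref{prop:prop}, Lemma~\ref{lemma:center}, and the multiplication rules, so no loop-theoretic closure arguments are needed (each described set equals $B$, which is a subloop by hypothesis). The main place to be careful is the bookkeeping that translates the coordinate-functional description of $\overline{B}$ back into the $(x,0)/(x,1)$ form of the three types and confirms that the three cases exhaust all nonzero $\lambda$; this is routine but is where a sign or coordinate slip would go unnoticed.
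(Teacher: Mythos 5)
Your proposal is correct and follows essentially the same route as the paper: both pass to the quotient $Q_{n}/\{1,-1\}\cong(\mathbb{Z}_2)^n$ (after checking $-1\in B$) and reduce the classification to $\mathbb{Z}_2$-linear algebra on the last coordinate. The paper organizes this with an explicit translation by an element whose last coordinate is $1$ and a short diassociativity argument for the case $e\notin B$, whereas you dualize $\overline{B}$ to a hyperplane $\ker\lambda$ and case on the $i_n$-coefficient of $\lambda$ --- a cleaner bookkeeping of the same idea.
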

\begin{proof} 
	By Proposition \ref{prop:prop}, $Q_{n-1}$ is a subloop of $Q_{n}$ of index 2, it is of the first type. Let $B$ be a subloop of  $Q_{n}$ of index 2, we assume $B\neq  Q_{n-1}$ further in the proof. By Lemma \ref{lemma:center}, $Z(Q_{n})=\left\{1,-1\right\}\in B$. Consider $B/Z(Q_{n})$ and $Q_{n}/Z(Q_{n})$. By Remark~\ref{cor:Zn}, $Q_{n}/Z(Q_{n})\cong(\mathbb{Z}_{2})^n$. Also, there is $(a_{1},\ldots,a_{n})\in B/Z(Q_{n})$ such that $a_{n}=1$, because $B\neq  Q_{n-1}$. Define a map $\phi:B/Z(Q_{n})\mapsto B/Z(Q_{n})$ by $(x_{1},\ldots,x_{n})\mapsto (x_{1},\ldots,x_{n})(a_{1},\ldots,a_{n})=(y_{1},\ldots,y_{n})$, then $\phi$ maps elements with $x_{n}=1$ $(x_{n}=0)$ to elements with $y_{n}=0$ $(y_{n}=1)$. Hence $B/Z(Q_{n})$ contains the same number of elements that end in 0 and that end in 1, and hence a group $\{(x_{1},\ldots,x_{n})\left|\right.(x_{1},\ldots,x_{n})\in B,x_{n}=0\}$ is a subgroup of $B/Z(Q_{n})$ of index 2. This implies that $D=\{\pm(x_{1},\ldots,x_{n})\left|\right.(x_{1},\ldots,x_{n})\in B,x_{n}=0\}$ is a subloop of $B$ of index 2 and hence $D$ is a subloop of $Q_{n-1}$ of index 2. \\
Suppose $e\in B$. Define $\psi:B/Z(Q_{n})\mapsto B/Z(Q_{n})$ by $(x_{1},\ldots,x_{n-1},x_{n})\mapsto (x_{1},\ldots,x_{n-1},x_{n})(1,1)=(x_{1},\ldots,x_{n-1},y_{n})$. $\psi$ fixes coordinates $x_{1},\ldots,x_{n-1}$ and maps $x_{n}=1$ $(x_{n}=0)$ to $y_{n}=0$ $(y_{n}=1)$. Therefore when $e\in B$, we see that $B=D\oplus De$, the subloop of the second type.\\
Now suppose $e\notin B$. Suppose there is an element $(x,0)\in B/Z(Q_{n})$ such that $(x,0)(1,1)\in B/Z(Q_{n})$. By diassociativity, $(x,0)((x,0)(1,1))=((x,0)(x,0))(1,1)=(1,1)=e \in B/Z(Q_{n})$, contradicts the assumption that $e\notin B$. This means that $e\notin B$ implies $B=D\oplus (Q_{n-1}\backslash D)e$, the subloop of the third type. 	
\end{proof}
\begin{figure}
	\centering
		\includegraphics[width=14cm]{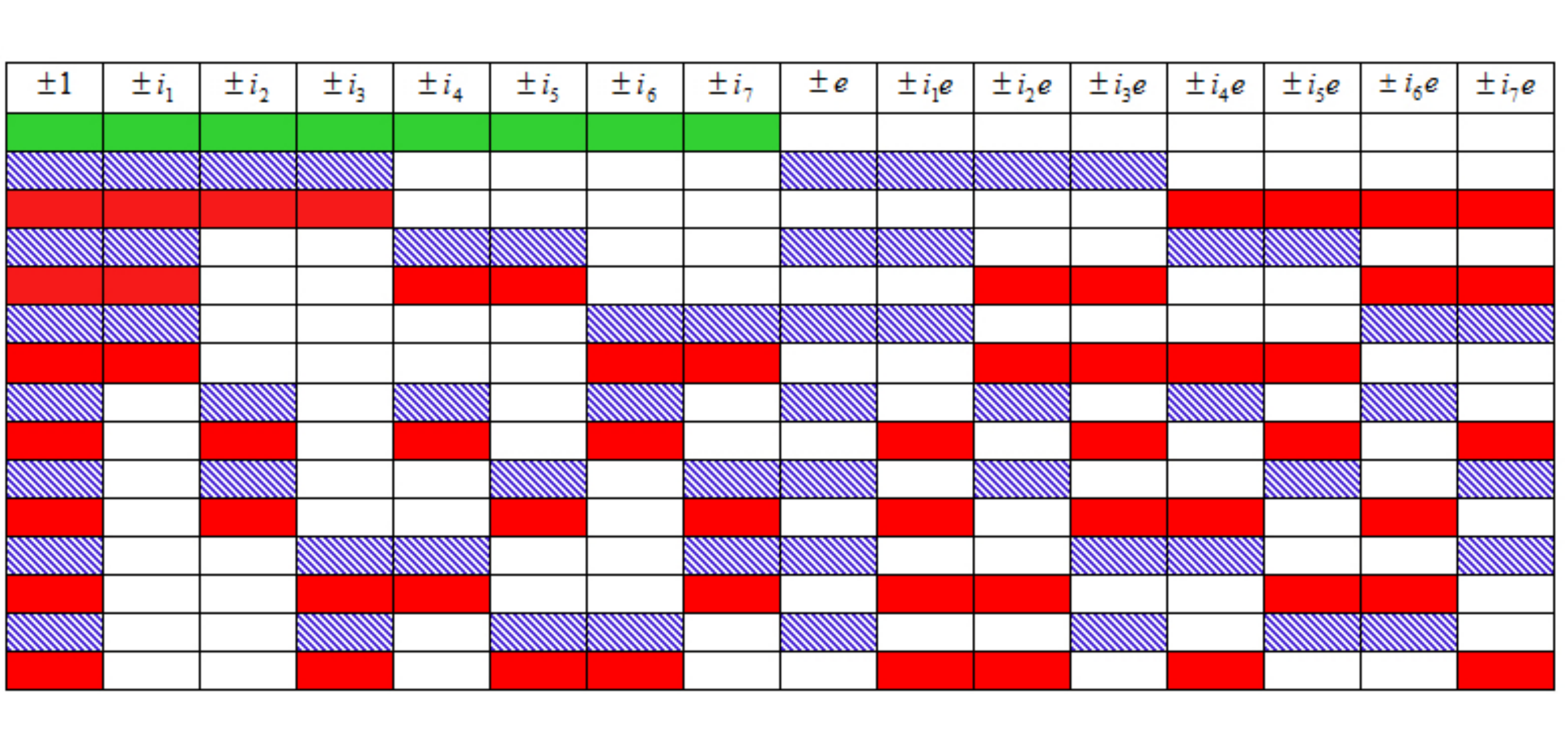}
		\caption{Subloops of $\mathbb{S}_{32}$ of index 2}
	\label{fig:SubInd2inSed}
\end{figure}
Next, we show that, starting at $\mathbb{S}_{32}$, any subloop of $Q_{n}$ of the third type is not a Cayley-Dickson loop. 
\begin{lemma}\label{lemma:aut3} Let $B \neq Q_{n-1}$ be a subloop of $Q_{n}$ of index 2 and $D$ be a subloop of $Q_{n-1}$ of index 2, $n\geq4$.
\begin{enumerate}
	\item \label{lemma:aut3-1} For any $x\in  Q_{n-1}$ there exist $y,z\in  Q_{n-1}$ such that $\left\langle x,y,z\right\rangle\cong\mathbb{O}_{16}$, $\{x,y,z\}\cap D\neq\emptyset$ and \\$\{x,y,z\}\cap\left(Q_{n-1}\backslash D \right)\neq\emptyset$.
	\item \label{lemma:aut3-2} If $e\notin B$ then for any $x\in B$ there exist $y,z\in B$ such that $\left\langle x,y,z\right\rangle\cong\tilde{\mathbb{O}}_{16}$.
	\item If $e\notin B$ then $B\ncong  Q_{n-1}$. In particular, any subloop of the third type is not a Cayley-Dickson loop.
\end{enumerate}
\end{lemma}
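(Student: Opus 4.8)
The plan is to make part 1 the engine and then read off parts 2 and 3 from it using Lemmas~\ref{lemma:aut1} and~\ref{lemma:aut2}. I would work throughout in the vector space $Q_{n-1}/\{1,-1\}\cong(\mathbb{Z}_2)^{n-1}$ of Remark~\ref{cor:Zn}, in which $D$ becomes a hyperplane $\bar{D}$ and a triple generates a subloop of size $16$ exactly when its three images are linearly independent. I would also use that $Q_{n-1}$ is itself a Cayley-Dickson loop (with $n-1\geq3$), so Lemma~\ref{lemma:aut2} applies inside it with distinguished element $i_{n-1}$: if $i_{n-1}\notin\langle u,v\rangle\cong\mathbb{H}_{8}$ then $\langle u,v,i_{n-1}\rangle\cong\mathbb{O}_{16}$.

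For part 1 I may assume $x\neq\pm1$, which is the only case used below (as $\pm1$ lies in every subloop). The reduction is: it suffices to exhibit one copy $O\cong\mathbb{O}_{16}$ with $x\in O$ and $O\not\subseteq D$. Indeed, in $\bar{O}\cong(\mathbb{Z}_2)^3$ the subspace $\bar{O}\cap\bar{D}$ is then $2$-dimensional, so both $O\cap D$ and $O\setminus D$ are nonempty, and elementary linear algebra supplies a generating triple of $O$ containing $x$ and meeting both $O\cap D$ and $O\setminus D$; this triple generates $O\cong\mathbb{O}_{16}$ and straddles $D$. To build $O$, I would first pick $w\in Q_{n-1}\setminus D$ with $w\neq\pm x$, so that $\langle x,w\rangle\cong\mathbb{H}_{8}$ by Theorem~\ref{thm:culbert}, and then extend this $\mathbb{H}_{8}$ to an octonion copy through $i_{n-1}$: if $i_{n-1}\notin\langle x,w\rangle$ take $O=\langle x,w,i_{n-1}\rangle$, and otherwise rewrite $\langle x,w\rangle=\langle i_{n-1},s\rangle$ and set $O=\langle s,t,i_{n-1}\rangle$ for a fresh $t\notin\langle x,w\rangle$; in both cases Lemma~\ref{lemma:aut2} gives $O\cong\mathbb{O}_{16}$, and $w\in O\setminus D$ shows $O\not\subseteq D$. \emph{This is the main obstacle}: a generic extension of an $\mathbb{H}_{8}$ need not be Moufang, so one cannot simply adjoin an arbitrary third generator; routing the extension through $i_{n-1}$ and Lemma~\ref{lemma:aut2} is exactly what forces the copy to be $\mathbb{O}_{16}$ rather than $\tilde{\mathbb{O}}_{16}$. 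The counting needed to guarantee admissible $w,t$ is routine since $|Q_{n-1}\setminus D|=2^{n-1}\geq8$ for $n\geq4$.

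Part 2 follows quickly. By Lemma~\ref{lemma:aut4}, $e\notin B$ forces $B=D\oplus(Q_{n-1}\setminus D)e$, so every element $(a,\epsilon)\in B$ has its last coordinate determined by $a$: namely $\epsilon=0$ when $a\in D$ and $\epsilon=1$ when $a\notin D$. Given non-central $x=(a,\epsilon)\in B$ (so $a\neq\pm1$), I would apply part 1 to $a$, obtaining $b,c$ with $\langle a,b,c\rangle\cong\mathbb{O}_{16}$ and $\{a,b,c\}$ meeting both $D$ and $Q_{n-1}\setminus D$. Lifting $a,b,c$ to their unique representatives in $B$ attaches last coordinate $0$ to those in $D$ and $1$ to those outside, so the straddling condition makes the pattern of last coordinates neither all-$0$ nor all-$1$; that is, of type $(0,0,1)$ or $(0,1,1)$ up to order. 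By Lemma~\ref{lemma:aut1} each such mixed pattern yields a subloop $\cong\tilde{\mathbb{O}}_{16}$ lying in $B$ and containing $x$. The case $x=\pm1$ is immediate, since $\pm1$ lies inside any of the copies just produced.

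For part 3 I would use the isomorphism invariant ``some element lies in no subloop $\cong\tilde{\mathbb{O}}_{16}$''. The element $i_{n-1}$ witnesses this in $Q_{n-1}$: any size-$16$ subloop $T\ni i_{n-1}$ can be written $T=\langle i_{n-1},u,v\rangle$ by extending $\bar{i}_{n-1}$ to a basis of $\bar{T}\cong(\mathbb{Z}_2)^3$, whence $T\cong\mathbb{O}_{16}$ by Lemma~\ref{lemma:aut2}, so $T\not\cong\tilde{\mathbb{O}}_{16}$. Part 2, on the other hand, shows that a third-type $B$ has \emph{every} element inside some $\tilde{\mathbb{O}}_{16}$, so $B$ fails this invariant and hence $B\not\cong Q_{n-1}$. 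Finally $|B|=2^{n}=|Q_{n-1}|$, and $Q_{n-1}$ is the only Cayley-Dickson loop of that order, so $B$ is not a Cayley-Dickson loop.
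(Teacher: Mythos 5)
Your proof is correct and follows essentially the same route as the paper's: part 1 hinges on Lemma~\ref{lemma:aut2} applied inside $Q_{n-1}$ with distinguished element $i_{n-1}$, part 2 combines Lemma~\ref{lemma:aut4} with the mixed-coordinate patterns of Lemma~\ref{lemma:aut1}, and part 3 uses the same invariant (every size-$16$ subloop through the distinguished element is a copy of $\mathbb{O}_{16}$, while every element of $B$ lies in a copy of $\tilde{\mathbb{O}}_{16}$). The only real difference is cosmetic: in part 1 the paper takes the straddling triple to be $\{x,y,e\}$ directly, whereas you first build an octonion copy through $i_{n-1}$ and then re-select generators inside it by linear algebra in $(\mathbb{Z}_2)^{n-1}$ --- which has the minor virtue of treating the degenerate cases $x=\pm1$ and $x=\pm i_{n-1}$ more explicitly than the paper does.
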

\begin{proof}
\begin{enumerate}
	\item The size of $D$ is $\frac{\left| Q_{n-1}\right|}{2}\geq 8$. Let $e\in  Q_{n-1}$. If $x\in D$, choose $y\notin D\cup \left\langle e,x\right\rangle$, then $\left\langle e,x,y\right\rangle\cong\mathbb{O}_{16}$ by Lemma~\ref{lemma:aut2}. Similarly, if $x\notin D$, choose $y\in D$, $y\notin \left\langle e,x\right\rangle$, then $\left\langle e,x,y\right\rangle\cong\mathbb{O}_{16}$ by Lemma~\ref{lemma:aut2}. If $x=e$, choose $y\notin D\cup\left\langle e\right\rangle$ and $z\in D\backslash \left\langle e,y\right\rangle$, then $\left\langle e,x,y\right\rangle\cong\mathbb{O}_{16}$ by Lemma~\ref{lemma:aut2}. 
	\item By Lemma~\ref{lemma:aut4}, $B=D\oplus\left(Q_{n-1}\backslash D\right)e$ for some subloop $D$ of $Q_{n-1}$ of index 2. Without loss of generality, suppose $x\in D$. By \ref{lemma:aut3-1} there exist $y,z\in  Q_{n-1}$ such that $\left\langle x,y,z\right\rangle\cong\mathbb{O}_{16}$, $\{x,y,z\}\cap D\neq\emptyset$ and $\{x,y,z\}\cap \left(Q_{n-1}\backslash D\right) \neq\emptyset$. Again, without loss of generality, suppose $y\in D$ and $z\in  Q_{n-1}\backslash D$, therefore $\left(x,0\right),\left(y,0\right),\left(z,1\right)\in B$. Using \eqref{eqn:oct1}, \eqref{eqn:oct2}, $\left\langle \left(x,0\right),\left(y,0\right),\left(z,1\right)\right\rangle\cong \tilde{\mathbb{O}}_{16}$.
	\item By Lemma~\ref{lemma:aut2}, there is an element $e\in Q_{n-1}$ such that for any $x,y\in  Q_{n-1}$, $\left|\left\langle e,x,y\right\rangle\right|=16$ implies that $\left\langle e,x,y\right\rangle\cong\mathbb{O}_{16}$. However, by \ref{lemma:aut3-2}, $B$ doesn't contain such an element. 
\qedhere	
\end{enumerate}
\end{proof}
\begin{lemma}\label{lemma:aut7}
Let $x\in  Q_{n}\backslash \left\{\pm1,\pm e\right\}$, $n\geq4$. There exist $y,z\in  Q_{n}$ such that $\left\langle x,y,z\right\rangle\cong\tilde{\mathbb{O}}_{16}$.
\end{lemma}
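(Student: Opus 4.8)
The plan is to reduce the statement to the lifting Lemma~\ref{lemma:aut1}, which builds a copy of $\tilde{\mathbb{O}}_{16}$ in $Q_n$ out of a copy of $\mathbb{O}_{16}$ sitting one level down in $Q_{n-1}$. Write $x=(\bar{x},\epsilon)$ with $\bar{x}\in Q_{n-1}$ and $\epsilon\in\{0,1\}$. Since $1=(1,0)$ and $e=(1,1)$, the hypothesis $x\neq\pm1$ (relevant when $\epsilon=0$) and the hypothesis $x\neq\pm e$ (relevant when $\epsilon=1$) both amount to the single condition $\bar{x}\neq\pm1$, which is exactly what is needed to produce an octonion copy through $\bar{x}$.

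First I would apply part~\ref{lemma:aut3-1} of Lemma~\ref{lemma:aut3}: since $n\geq4$ (so that $|Q_{n-1}|\geq16$ and octonion subloops exist) and $\bar{x}\neq\pm1$, there exist $\bar{y},\bar{z}\in Q_{n-1}$ with $\langle\bar{x},\bar{y},\bar{z}\rangle\cong\mathbb{O}_{16}$. I would observe that $\bar{x}$ may be regarded as any one of the three octonion generators: the generated subloop is independent of the order of $\bar{x},\bar{y},\bar{z}$, and the hypotheses of Lemma~\ref{lemma:aut1}, namely that all pairwise commutators and all associators of the three generators equal $-1$ (Lemma~\ref{lemma:comm-assoc}), are invariant under permuting them.

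Then I would split on $\epsilon$ and invoke Lemma~\ref{lemma:aut1}. If $\epsilon=0$, set $y=(\bar{y},0)$ and $z=(\bar{z},1)$; Lemma~\ref{lemma:aut1} gives $\langle(\bar{x},0),(\bar{y},0),(\bar{z},1)\rangle\cong\tilde{\mathbb{O}}_{16}$, i.e.\ $\langle x,y,z\rangle\cong\tilde{\mathbb{O}}_{16}$. If $\epsilon=1$, I would instead apply Lemma~\ref{lemma:aut1} to the reordered triple $(\bar{y},\bar{z},\bar{x})$, which still generates $\mathbb{O}_{16}$, so that the last coordinate $1$ lands on the generator $\bar{x}$; this yields $\langle(\bar{y},0),(\bar{z},0),(\bar{x},1)\rangle\cong\tilde{\mathbb{O}}_{16}$, and taking $y=(\bar{y},0)$, $z=(\bar{z},0)$ again gives $\langle x,y,z\rangle\cong\tilde{\mathbb{O}}_{16}$.

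The one point I would be careful about is the reordering in the case $\epsilon=1$: Lemma~\ref{lemma:aut1} is stated for a fixed ordered triple with the $1$ in the last slot, so the argument depends on being entitled to place $\bar{x}$ there, which is justified precisely by the permutation symmetry noted above. Apart from this bookkeeping there is no new computation; the lemma follows by combining Lemmas~\ref{lemma:aut3} and~\ref{lemma:aut1}. Together with Lemma~\ref{lemma:aut2}, whose complementary content is that no copy of $\tilde{\mathbb{O}}_{16}$ contains $e$, this yields the dichotomy that forces an automorphism to send $e$ into $\{e,-e\}$.
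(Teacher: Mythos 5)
Your proposal is correct and follows essentially the same route as the paper: identify $x$ with an element of $Q_{n-1}$, invoke Lemma~\ref{lemma:aut3}.\ref{lemma:aut3-1} to find an octonion copy $\left\langle \bar{x},\bar{y},\bar{z}\right\rangle\cong\mathbb{O}_{16}$, and lift it to a quasioctonion copy via Lemma~\ref{lemma:aut1} (equivalently, equations \eqref{eqn:oct1}--\eqref{eqn:oct2}). The only difference is that where the paper writes ``without loss of generality, suppose $x\in Q_{n-1}$,'' you make the $\epsilon=1$ case explicit by permuting the generators so that the last coordinate $1$ sits on $\bar{x}$ --- a legitimate and slightly more careful rendering of the same argument.
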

\begin{proof}
Without loss of generality, suppose $x\in  Q_{n-1}$. By Lemma~\ref{lemma:aut3} part~\ref{lemma:aut3-1}, there exist $y,z\in  Q_{n-1}$ such that $\left\langle x,y,z\right\rangle\cong\mathbb{O}_{16}$. Using \eqref{eqn:oct1}, \eqref{eqn:oct2}, $\left\langle \left(x,0\right),\left(y,0\right),\left(z,1\right)\right\rangle\cong \tilde{\mathbb{O}}_{16}$.
\end{proof}
On $Q_{n}$, define maps 
\begin{eqnarray}
(id,-id) & : & (x,x_{n+1})\mapsto((-1)^{x_{n+1}}x,x_{n+1}),\\
(id,id) & : & (x,x_{n+1})\mapsto (x,x_{n+1}),
\end{eqnarray} 
where $x\in  Q_{n-1}$ and $x_{n+1}\in\{0,1\}$. The map $(id,id)$ is an identity; the map $\phi=(id,-id)$ is an automorphism because 
\begin{eqnarray*}
\phi((x,0)(y,0)) & = & \phi((xy,0))=(xy,0)=(x,0)(y,0)=\phi((x,0))\phi((y,0)), \\
\phi((x,0)(y,1)) & = & \phi((yx,1))=(-yx,1)=(x,0)(-y,1)=\phi((x,0))\phi((y,1)), \\
\phi((x,1)(y,0)) & = & \phi((xy^{*},1))=(-xy^{*},1)=(-x,1)(y,0)=\phi((x,1))\phi((y,0)), \\ 
\phi((x,1)(y,1)) & = & \phi((-y^{*}x,0))=(-y^{*}x,0)=(-x,1)(-y,1)=\phi((x,1))\phi((y,1)).
\end{eqnarray*} 
\begin{proof}{(of Theorem \ref{thm:autom})}
Let $\phi: Q_{n}\mapsto  Q_{n}$, $n\geq 4$, be an automorphism. 
\begin{enumerate}
	\item By Proposition~\ref{prop:prop}, $\phi\left(1\right)=1$, $\phi\left(-1\right)=-1$.
	\item Let $x\in  Q_{n}\backslash\left\{\pm1,\pm e\right\}$. By Lemma~\ref{lemma:aut7}, there exist $y,z \in  Q_{n}$ such that $\left\langle x,y,z\right\rangle\cong \tilde{\mathbb{O}}_{16}$, however, by Lemma~\ref{lemma:aut2}, $\left\langle e,y,z\right\rangle\cong \mathbb{O}_{16}$ for any $y,z \in  Q_{n}$. Therefore it is only possible that $\phi\left(e\right)=e$, which holds when $\phi$ is an identity map, or $\phi\left(e\right)=-e$, which holds when $\phi=(id,-id)$.	   
	\item Consider the subloops of $Q_{n}$ of index 2. By Lemma~\ref{lemma:aut3}, any such subloop of the third type is not isomorphic to $Q_{n-1}$. A subloop of the first type (there is only one such subloop) is a copy of $Q_{n-1}$ in $Q_{n}$ of the form $\left\{(x,0)\left| \right.x\in  Q_{n-1}\right\}$. Therefore any element $(x,0)$ is contained in at least one more copy of $Q_{n-1}$ compared to an element $(y,1)$. This shows that for every $x \in  Q_{n-1}$,  $\phi\left((x,0)\right)=(y,0)$ for some $y \in  Q_{n-1}$ and hence $\psi\in Aut(Q_{n-1})$.
	\item Let $x\in Q_{n-1}$. Using multiplication formula \eqref{eqn:mult1}, $xe=(x,0)(1,1)=(x,1)$. If $\phi$ is an automorphism on $Q_{n}$, then $\phi((x,1))=\phi((x,0)(1,1))=\phi((x,0))\phi((1,1))=\psi(x)\phi(e)$.
	\qedhere
\end{enumerate} 
\end{proof}
Finally, we show that, starting at $\mathbb{S}_{32}$, $Aut(Q_{n})$ is a direct product of $Aut(Q_{n-1})$ and a cyclic group of order 2. 
\begin{theorem}\label{thm:autdirectproduct}
Let $Q_{n}$ be a Cayley-Dickson loop and let $n\geq 4$. Then $Aut\left(Q_{n}\right)\cong Aut\left(Q_{n-1}\right)\times \mathbb{Z}_{2}$.
\end{theorem}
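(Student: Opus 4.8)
The plan is to exhibit an explicit group isomorphism
\[
\Phi: Aut(Q_{n}) \longrightarrow Aut(Q_{n-1}) \times \mathbb{Z}_{2},
\]
with Theorem~\ref{thm:autom} already supplying almost all of the structural work. Given $\phi \in Aut(Q_{n})$ with $n\geq 4$, parts~3 and~2 of Theorem~\ref{thm:autom} guarantee that $\psi=\phi\upharpoonright_{Q_{n-1}}$ lies in $Aut(Q_{n-1})$ and that $\phi(e)\in\{e,-e\}$. I would therefore define $\Phi(\phi)=(\psi,\epsilon)$, where $\epsilon=0$ if $\phi(e)=e$ and $\epsilon=1$ if $\phi(e)=-e$, regarding $\mathbb{Z}_{2}=\{0,1\}$ under addition modulo~2.

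Injectivity is then immediate from part~4 of Theorem~\ref{thm:autom}: since $\phi$ agrees with $\psi$ on $Q_{n-1}$ and satisfies $\phi((x,1))=\psi(x)\phi(e)$ for all $x\in Q_{n-1}$, the automorphism $\phi$ is completely determined by the pair $(\psi,\phi(e))$, hence by $\Phi(\phi)$. To see that $\Phi$ is a homomorphism, take $\phi_{1},\phi_{2}\in Aut(Q_{n})$ with images $(\psi_{1},\epsilon_{1})$ and $(\psi_{2},\epsilon_{2})$. Because $\phi_{2}$ maps $Q_{n-1}$ into itself (part~3), the restriction of $\phi_{1}\circ\phi_{2}$ to $Q_{n-1}$ is $\psi_{1}\circ\psi_{2}$; and since $\phi_{1}(-1)=-1$, one computes $(\phi_{1}\circ\phi_{2})(e)=\phi_{1}((-1)^{\epsilon_{2}}e)=(-1)^{\epsilon_{1}+\epsilon_{2}}e$, so the signs add modulo~2. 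Thus $\Phi(\phi_{1}\circ\phi_{2})=(\psi_{1}\psi_{2},\epsilon_{1}+\epsilon_{2})=\Phi(\phi_{1})\Phi(\phi_{2})$.

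The remaining step, surjectivity, is where the genuine new content lies, and it is the part I expect to require the most care: I must show that every pair $(\psi,\epsilon)$ actually arises from an automorphism of $Q_{n}$. For $\psi\in Aut(Q_{n-1})$ I would introduce the coordinatewise lift $\hat\psi((x,x_{n+1}))=(\psi(x),x_{n+1})$, which is a bijection since $\psi$ is, and verify directly, using the product formulas \eqref{eqn:mult0}--\eqref{eqn:mult2} together with $(x,1)(y,1)=(-y^{*}x,0)$, that $\hat\psi$ is an automorphism of $Q_{n}$ with $\hat\psi\upharpoonright_{Q_{n-1}}=\psi$ and $\hat\psi(e)=e$, so that $\Phi(\hat\psi)=(\psi,0)$. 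The one non-formal ingredient in this verification is that $\hat\psi$ respects the conjugations appearing in the rules for $(x,1)(y,0)$ and $(x,1)(y,1)$, i.e.\ that $\psi(x^{*})=\psi(x)^{*}$ for all $x\in Q_{n-1}$; this holds because $\psi$ fixes $1$ and $-1$ and $x^{*}\in\{x,-x\}$ by Proposition~\ref{prop:prop}, whence $\psi(x^{*})=\psi(x)^{*}$ in every case.

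Finally, composing $\hat\psi$ with the automorphism $(id,-id)$ defined just before the proof of Theorem~\ref{thm:autom}, which restricts to the identity on $Q_{n-1}$ and sends $e\mapsto -e$ so that $\Phi((id,-id))=(id,1)$, yields $\Phi((id,-id)\circ\hat\psi)=(\psi,1)$ by the homomorphism property. Hence both $(\psi,0)$ and $(\psi,1)$ lie in the image for every $\psi\in Aut(Q_{n-1})$, so $\Phi$ is onto. Being an injective, surjective homomorphism, $\Phi$ is an isomorphism, and therefore $Aut(Q_{n})\cong Aut(Q_{n-1})\times\mathbb{Z}_{2}$.
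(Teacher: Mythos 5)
Your proof is correct, and it reorganizes the argument rather than merely restating it. The paper proves the theorem as an \emph{internal} direct product: it identifies $K=Aut(Q_{n-1})$ and $H=\{(id,id),(id,-id)\}$ as subgroups of $G=Aut(Q_n)$, checks both are normal, that $KH=G$, and that $K\cap H=1$. You instead build the \emph{external} isomorphism $\Phi(\phi)=(\phi\upharpoonright_{Q_{n-1}},\epsilon)$ and verify injectivity, the homomorphism property, and surjectivity directly. The substantive inputs are the same in both cases --- Theorem~\ref{thm:autom} and the automorphism $(id,-id)$ --- but your version makes explicit a step the paper leaves tacit: for $Aut(Q_{n-1})$ to be regarded as a subgroup of $Aut(Q_n)$ with $[G:K]=2$ (as the paper asserts without comment), one must know that every $\psi\in Aut(Q_{n-1})$ extends to an automorphism of $Q_n$; your coordinatewise lift $\hat\psi((x,x_{n+1}))=(\psi(x),x_{n+1})$, together with the observation that $\psi(x^{*})=\psi(x)^{*}$ because $\psi$ fixes $\pm 1$ and $x^{*}\in\{x,-x\}$, supplies exactly this and is the only genuinely new computation either proof needs. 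The trade-off is that the paper's route gets normality and the product decomposition almost for free from index considerations, at the cost of leaving the embedding implicit, while yours is longer on routine verification but self-contained and gives the count $|Aut(Q_n)|=2\,|Aut(Q_{n-1})|$ as an immediate by-product. Both arguments are sound.
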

\begin{proof}
Let $G=Aut\left(Q_{n}\right)$, $K=Aut\left(Q_{n-1}\right)$, $H=\left\{\left(id,id\right),\left(id,-id\right)\right\}\cong \mathbb{Z}_{2}$, $n\geq 4$.
\begin{enumerate}
	\item A group $K$ is normal in $G$ because $\left[G:K\right]=2$.  
	\item Next, show that $H$ is normal in $G$. Let $g\in G$, $h\in H$. Notice that $g^{-1}hg\in H$ iff $g^{-1}hg\upharpoonright_{Q_{n-1}}=id_{Q_{n-1}}$. Let $x\in  Q_{n-1},$ $g=kh_{0}$, where $k\in K$, $h_{0}\in H$.
	\begin{equation*} g^{-1}hg\left(x\right)=h_{0}^{-1}k^{-1}hk\underbrace{h_{0}\left(x\right)}_{x}=h_{0}^{-1}k^{-1}\underbrace{hk\left(x\right)}_{k\left(x\right)\in Q_{n-1}}=h_{0}^{-1}\underbrace{k^{-1}k\left(x\right)}_{x}=h_{0}^{-1}\left(x\right)=x,
	\end{equation*}
	therefore $g^{-1}hg\in H$.
	\item Both $K$ and $H$ are normal subgroups of $G$, therefore $KH\leq G$. Also, $\left|KH\right|\geq2\left|K\right|=\left|G\right|$, hence $KH=G$.
	\item Obviously, $(id,-id)\notin K$ and $H \cap K = id$.
	\qedhere
\end{enumerate}
\end{proof}
\paragraph{Acknowledgement} We thank Petr~Vojt\v{e}chovsk\'{y} for numerous discussions and suggestions. 
\newpage
\bibliography{cdlbib}

\begin{thebibliography}{10}

\bibitem{Cawagas:04}
R.~E. Cawagas.
\newblock On the structure and zero divisors of the {C}ayley-{D}ickson sedenion
  algebra.
\newblock {\em Discuss. Math. Gen. Algebra Appl.}, 24:251--265, 2004.

\bibitem{Cawagas:09}
R.~E. Cawagas, A.~S. Carrascal, L.~A. Bautista, J.~P.~Sta. Maria, J.~D.
  Urrutia, and B.~Nobles.
\newblock The subalgebra structure of the {C}ayley-{D}ickson algebra of
  dimension 32.
\newblock arXiv:0907.2047v3.

\bibitem{Culbert:07}
C.~Culbert.
\newblock Cayley-{D}ickson algebras and loops.
\newblock {\em J. Gen. Lie Theory Appl.}, 1(1):1--17, 2007.

\bibitem{Hurwitz:1898}
A.~Hurwitz.
\newblock {\"U}ber die {C}omposition der quadratischen {F}ormen von beliebig
  vielen {V}ariabeln ({O}n the composition of quadratic forms of arbitrary many
  variables) (in {G}erman).
\newblock {\em Nachr. Ges. Wiss. Gottingen}, pages 309--316, 1898.

\bibitem{KocaKoc:1995}
M.~Koca and R.~Ko\c{c}.
\newblock Octonions and the group of order 1344.
\newblock {\em Turk. J. Phys.}, 19:304--319, 1995.

\bibitem{Moufang:35}
R.~Moufang.
\newblock Zur {S}truktur von {A}lternativk\"{o}rpern (in {G}erman).
\newblock {\em Math.~Ann.}, 110:416--430, 1935.

\bibitem{NagyVojt:06}
G.~P. Nagy and P.~Vojt\v{e}chovsk\'{y}.
\newblock L{O}{O}{P}{S}, {P}ackage for {G}{A}{P} 4.
\newblock Available at http://www.math.du.edu/loops.

\bibitem{Norton:52}
D.~A. Norton.
\newblock Hamiltonian loops.
\newblock {\em Proc. Amer. Math. Soc.}, 3:56--65, 1952.

\bibitem{Pflugfelder:90}
H.~O. Pflugfelder.
\newblock {\em {Q}uazigroups and {L}oops: {I}ntroduction}.
\newblock Heldermann, 1990.

\bibitem{Rotman:10}
J.~J. Rotman.
\newblock {\em {A}dvanced {M}odern {A}lgebra}.
\newblock {A}merican Mathematical Society, 2nd edition, 2010.

\bibitem{Zassenhaus:99}
H.~J. Zassenhaus.
\newblock {\em The Theory of Groups}.
\newblock Dover, 2nd edition, 1999.

\end{thebibliography}
\bibliographystyle{plain} 
\end{document}